\newtheorem{theorem}{Theorem}[section]
\newtheorem{lem}[theorem]{Lemma}
\newtheorem{conj}[theorem]{Conjecture}
\newtheorem{assum}[theorem]{Assumption}
\theoremstyle{definition}
\newtheorem{prop}[theorem]{Proposition}
\newcommand{\ind}{\mathrm{index}}
\newcommand{\ed}{\mathrm{End}}
\newcommand{\map}{\mathrm{Map}}
\newcommand{\dir}{\mathrm{D}}
\newcommand{\Dir}{\mathcal{D}}
\newcommand{\sign}{\mathrm{Sign}}
\newcommand{\vol}{\mathrm{Vol}}
\theoremstyle{remark}
\numberwithin{equation}{section}
\begin{document}

\title{Seiberg-Witten-Casson Invariant of Homology $S^1\times S^3$ with Circle Action}

\author{Daoyuan Han}
\address{Department of Mathematics, Brandeis University, Waltham, Massachusetts 02453}
\curraddr{Department of Mathematics, Lehigh University, Bethlehem, Pennsylvania 18015}
\email{dah517@lehigh.edu}



\date{}



\begin{abstract}
In this paper we shall compute the Mrowka-Ruberman-Saveliev invariant introduced in \cite{liftof} for the case when the manifold admits a free circle action. 
\end{abstract}

\maketitle

\section{Introduction}
The Mrowka-Ruberman-Saveliev invariant \cite{liftof} defined for 4-manifolds with $b_2^+=0$ is the count of irreducible solutions plus an index of Dirac operator over end-periodical manifold. Several special cases have been computed by others, for example, when X is of the form $S^1\times Y$ and $Y$ is of the homology type of the three sphere $S^3$, Mrowka, Ruberman, Saveliev proved using Lim's work in \cite{equiofseib} that it coincides with Casson's invariant. The authors of \cite{liftof} computed this invariant in other cases, like mapping tori. Moreover, we want to verify a special case of the conjecture made in the paper \cite{liftof}, which states that the Mrowka-Ruberman-Saveliev invariant is the same as the invariant define by Furuta and Ohta in \cite{foinvariant}, which is considered as another generalization of Casson's invariant to 4-manifold.

We shall begin this section by reviewing the definition of the Mrowka-Ruberman-Saveliev invariant introduced in \cite{liftof}. Let $X$ be an integral homology $S^1\times S^3$, then define 
\[
\lambda_{SW}(X)=\#\mathcal{M}(X,g,\beta)-\omega(X,g,\beta),
\]
where $\#\mathcal{M}(X,g,\beta)$ is the count of irreducible solutions in the Seiberg-Witten moduli space over $X$ equipped with metric $g$ and perturbation $\beta$ and $\omega(X,g,\beta)$ is a correction term so that $\lambda_{SW}(X)$ is independent of $g$ and $\beta$. The correction term $\omega(X,g,\beta)$ is defined as
\[
\omega(X,g,\beta)=\ind \Dir^{+}(Z_+,g,\beta)+\text{sign}(Z)/8,
\]
where $Z_+=Z\cup W_1\cup W_2\cup\cdots$, with each copy of $W_i$ a cobordism formed by cutting along a 3-submanifold $M$ representing generator of $H_3(X)$, and $Z$ is a spin 4-manifold with boundary $M$. $\Dir^+(Z_+,g,\beta)$ is the Dirac operator over $Z_+$ equipped with $Spin$-structure extending that over $W$ to $Z$. And it is proved in \cite{liftof} that this correction term is independent of $Z$ and the way to extend the Spin structure. 

This invariant can be treated as a lift of Rohlin's invariant by Theorem A in \cite{liftof}, which can also be considered as a generalization of Theorem 1.2 by Chen in \cite{weimin}, where an integer invariant $\alpha(Y)$ for homology sphere $Y$ is defined and equal to Rohlin's invariant $\text{mod }  2$. The Chen's invariant in \cite{weimin} is also defined as a combination of Seiberg-Witten invariant and index correction term. 

The proof that $\lambda_{SW}$ is well-defined in \cite{liftof} uses the blown-up of SW-equation and shows at first that for generic metric and perturbation $(g,\beta)$, the pair is regular, meaning that the corresponding blown-up moduli space has no reducible solution. The moduli space under regular pair of $(g,\beta)$ is a zero dimensional manifold by computing the virtual dimension. Considering a path of such regular pairs, it's proved that the corresponding parametrized  moduli space is a 1-dimensional manifold with boundary $\mathcal{M}(X,g_0,\beta_0)\cup\mathcal{M}(X,g_1,\beta_1)\cup\mathcal{M}^0_I$ where $\mathcal{M}^0_I$ denotes the path components approaching reducibles. Thus the change of Seiberg-Witten invariants can be expressed as the count of points in $\mathcal{M}^0_I$. Then the remainder of the proof shows that there is a 1-1 correspondence between the change of correction terms along the same path $(g_t,\beta_t)$ and $\mathcal{M}^0_I$. It is achieved by expressing the change of correction terms as a spectral flow of certain path of Dirac operators over $X$. This new path of Dirac operators over $X$ is derived from Laplace-Fourier transform of the end-periodic Dirac operators over $Z_+$. Note that the spectral flow only changes when a Dirac operator on the path has nontrivial kernel and it remains to be seen that the parameters where the kernel is nontrivial are in 1-1 correspondence with the points in $\mathcal{M}^0_I$.

When the manifold admits a free circle action, the Seiberg-Witten invariant and the correction term can be computed more explicitly. We assume that the circle action induces a $S^1$-bundle $\pi: X\rightarrow Y$ whose Euler number $e=1$. The submanifold $M$ of $X$ representing the homology generator of $H_3(X; \mathbb{Z})$, fibers over a 2-surface $\Sigma$. It is shown by Baldridge \cite{baldridge} that in this case, the Seiberg-Witten invariant of $X$ can be related to the 3 dimensional Seiberg-Witten invariant of $Y$, and this 3-dimensional Seiberg-Witten invariant can be further related to Alexander polynomial of a knot, surgery on which of $S^3$ gives $Y$. The correction term on the other hand, can be computed by using a special neck-stretching metric in \cite{split} over $Z_+$ with the effect of stretching $M\times[0,R]$ by letting $R$ be sufficiently large, and we can then use the index formula for cylindrical end manifold to compute the correction term. Thus we have the following theorem, 

\begin{theorem}\label{main}
Let $X$ be a smooth 4-manifold of integral homology $S^1\times S^3$ with a free circle action such that $X$ is circle bundle over $Y$ and $H_*(Y)=H_*(S^1\times S^2)$.  Then there exists a pair $(g_X,\beta)$ such that $\#\mathcal{M}(X,g_X,\beta)=\Delta_Y''(1)$, where $\Delta_Y(t)$ denotes the normalized Alexander polynomial and the correction term $\omega(X,g_X,\beta)=0$. 
\end{theorem}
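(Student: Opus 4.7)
The plan is to prove the two assertions independently: the moduli count $\#\mathcal{M}(X,g_X,\beta) = \Delta_Y''(1)$ and the vanishing of the correction term $\omega(X,g_X,\beta) = 0$. Both steps exploit the $S^1$-symmetry of $X$ to reduce four-dimensional data to calculations on the base $Y$, on the cut-open cobordism $W$, or on the fibered 3-manifold $M$.

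For the Seiberg--Witten count, I would apply Baldridge's dimensional reduction from \cite{baldridge}: by choosing an $S^1$-invariant metric $g_X$ and perturbation $\beta$ pulled back from $Y$, solutions of the four-dimensional equations on $X$ correspond bijectively to solutions of the three-dimensional Seiberg--Witten equations on $Y$, where the Euler number hypothesis $e = 1$ rules out non-invariant contributions. After verifying that for generic such pulled-back $(g_X,\beta)$ the moduli space is cut out transversely, the count becomes the three-dimensional SW invariant of $Y$ summed over the relevant spin$^c$ structures. Since $H_*(Y) = H_*(S^1 \times S^2)$, one may present $Y$ as $0$-surgery on a knot $K \subset S^3$ with $\Delta_Y(t) = \Delta_K(t)$; the Meng--Taubes formula, in its refined $b_1 = 1$ form, then identifies the total three-dimensional count with $\Delta_Y''(1)$, provided one uses the normalization $\Delta_Y(1) = 1$ and the symmetry $\Delta_Y(t) = \Delta_Y(t^{-1})$.

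For the correction term, I would choose a spin 4-manifold $Z$ filling $M$ in a manner compatible with the fibration $M \to \Sigma$ (for instance a filling built from a Seifert surface of $K$), then apply the neck-stretching procedure of \cite{split} to elongate the collar $M \times [0,R]$ inside $Z_+$. In the limit $R \to \infty$, the Laplace--Fourier analysis used to define $\omega$ in \cite{liftof} splits $\ind \Dir^+(Z_+,g,\beta)$ into an APS-type contribution on $Z$ with its cylindrical end, of the form $\int_Z \hat{A}(Z) - \eta_{\Dir}(M)/2$, plus an integrated spectral-flow contribution from the periodic end. Combining this with the Hirzebruch signature formula, which for a spin 4-manifold relates $\sign(Z)/8$ to $\int_Z \hat{A}(Z)$ together with the signature eta invariant $\eta_{\sign}(M)$, reduces the vanishing of $\omega$ to a cancellation among $\eta_{\Dir}(M)$, $\eta_{\sign}(M)/8$, and the periodic-end spectral flow. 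The $S^1$-invariance of the chosen metric and the fibered structure on $M$ make each of these terms computable in the adiabatic limit over $\Sigma$.

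The main obstacle is expected to be this final cancellation in the correction term. One must verify that the Laplace-transformed family of Dirac operators over $W$ has trivial kernel along the full parameter loop, so that the periodic-end spectral flow vanishes, and simultaneously compute the adiabatic limits of $\eta_{\Dir}(M)$ and $\eta_{\sign}(M)$ precisely enough to match the signature term. I expect the $e = 1$ hypothesis and the specific form of the fibration $M \to \Sigma$ to play the decisive role: they should force both eta invariants to be expressible purely in terms of data on $\Sigma$ in a way that makes the overall sum vanish, and they should simultaneously be compatible with the regularity requirements on $(g_X, \beta)$ needed to make the moduli count well-defined.
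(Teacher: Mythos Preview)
Your overall strategy matches the paper's: Baldridge's reduction plus Meng--Taubes for the count, and neck-stretching from \cite{split} followed by APS plus eta-invariant computations for the correction term. A few points where the paper proceeds differently or where your sketch is imprecise:

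\emph{On the periodic-end contribution.} You describe the neck-stretching limit as producing an APS piece on $Z$ \emph{plus} a residual ``periodic-end spectral flow'' that must separately be shown to vanish. This is not how \cite{split} works. The splitting theorem there states directly that, once the invertibility assumption on $\Dir^+(W_\infty)$ holds, $\omega(X_R,g_R)=\ind\Dir^+(Z_+(M))+\sigma(Z)/8$ with $Z_+(M)=Z\cup([0,\infty)\times M)$ a genuine cylindrical-end manifold; there is no leftover spectral-flow term to kill. The invertibility hypothesis is supplied for homology $S^1\times S^3$ by Theorem~10.3 of \cite{split}, so no Laplace--Fourier argument over $W$ is needed at this stage. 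Your proposed verification that ``the Laplace-transformed family has trivial kernel along the full loop'' is therefore either redundant or a repackaging of the same hypothesis.

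\emph{On the eta-invariant cancellation.} You correctly isolate this as the crux but are vague about the mechanism. The paper does not compute the adiabatic limits from scratch: it quotes Nicolaescu's explicit formula \cite{etacircle} for $\eta_{\Dir}$ on a circle bundle over a surface and Ouyang's formula \cite{ouyang} for $\eta_{\sign}$, both valid for the rescaled metrics $g_M^r$. Plugging in $l=1$ and $\vol(\Sigma)=\pi$ and summing $-\tfrac{1}{2}h_{\Dir}-\tfrac{1}{2}\eta_{\Dir}-\tfrac{1}{8}\eta_{\sign}$ gives $-\tfrac{1}{2}h_{\Dir}+h_{1/2}$, where $h_{1/2}=\dim H^0(\Sigma,K_\Sigma^{1/2})$; the $r$-dependent terms cancel algebraically. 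The final step, that $-\tfrac{1}{2}h_{\Dir}+h_{1/2}=0$, is again taken from \cite{etacircle}. You should anticipate these kernel terms rather than expecting a bare cancellation of eta invariants.

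\emph{A subtlety you omit.} The eta-invariant formulas from \cite{etacircle} are for the Dirac operator of the Levi--Civita connection of $g_X^r$, whereas the Seiberg--Witten setup naturally uses a bundle-compatible connection $\widetilde\nabla=d\oplus\pi^*\nabla^Y$. The paper bridges these via a path $\nabla^{r,t}$ of $g_X^r$-compatible connections and shows the difference of Dirac operators is Clifford multiplication by $-\tfrac{1}{2}r^2t^2\,\eta\wedge d\eta$, a zeroth-order perturbation that is compact on the relevant Sobolev spaces (this is where a Laplace--Fourier argument does appear, to handle compactness on the non-compact $W_\infty$). Without this step the two halves of the argument use incompatible operators.
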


Note that when the infinite cyclic cover $\widetilde{X}$ has the same homology as $S^3$,  we have $\#\mathcal{M}(X,g,\beta)=0$ as $\Delta_Y(t)$ is trivial. Thus we can verify the following conjecture made in \cite{liftof} in the case when $X$ admits a free circle action. 
\begin{conj}[\cite{liftof}]
For any smooth oriented homology oriented $4$-manifold $X$ with the $\mathbb{Z}[\mathbb{Z}]$-homology of $S^1\times S^3$, one has 
\[
\lambda_{SW}(X)=-\lambda_{FO}(X).
\]
\end{conj}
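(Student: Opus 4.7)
The plan is to verify the conjecture in the restricted setting where $X$ admits a free $S^1$-action, which is the only case this paper is equipped to treat; the general conjecture will require independent ideas. In this setting both sides reduce to computations involving the normalized Alexander polynomial of an underlying knot. For the left-hand side I apply Theorem~\ref{main}: under the hypothesis that $X \to Y$ is a circle bundle of Euler number one with $H_*(Y) = H_*(S^1 \times S^2)$, the theorem produces a pair $(g_X,\beta)$ such that $\#\mathcal{M}(X,g_X,\beta) = \Delta_Y''(1)$ and $\omega(X,g_X,\beta) = 0$, giving $\lambda_{SW}(X) = \Delta_Y''(1)$.

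For the right-hand side I would compute $\lambda_{FO}(X)$ for the same $X$. Recall that $\lambda_{FO}$ counts, with sign, the irreducible flat $SU(2)$-connections on the infinite cyclic cover $\widetilde X$, together with an index correction ensuring metric-independence. Since the circle action lifts to $\widetilde X$ and covers the infinite cyclic cover $\widetilde Y$ of $Y$, an $S^1$-equivariant reduction of the $ASD$ moduli on $\widetilde X$ along the fibers, analogous to Lim's dimensional reduction of the Seiberg--Witten equations over $S^1\times(\text{hom.\ sphere})$, should identify this four-dimensional count with a three-dimensional count of irreducible $SU(2)$-representations of $\pi_1(\widetilde Y)$. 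Since $Y$ is obtained by $0$-framed surgery on a knot $K\subset S^3$, the cover $\widetilde Y$ is the corresponding infinite cyclic cover of the knot complement, and by the standard Alexander polynomial formula for Casson-type invariants of knot-surgery manifolds this three-dimensional count evaluates to $-\Delta_K''(1) = -\Delta_Y''(1)$, yielding $\lambda_{FO}(X) = -\Delta_Y''(1) = -\lambda_{SW}(X)$, as desired.

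The main obstacle lies in the equivariant reduction of the Furuta--Ohta moduli. Unlike Lim's original setting, the base three-manifold $Y$ here is not a homology sphere but has $b_1 = 1$, so the flat moduli on $\widetilde Y$ contain a non-trivial family of abelian reducibles, and the dimension-counting argument must be modified — either by an equivariant admissible perturbation of the Chern--Simons functional on $\widetilde Y$ or by working with the equivariant Furuta--Ohta moduli directly — in a way that still produces a well-defined finite signed count and keeps the correction index computable. Equally delicate is the matching of orientation conventions across the $ASD$ count, the three-dimensional Casson-type invariant, and the Seiberg--Witten count of Theorem~\ref{main}, so that the predicted minus sign emerges rather than a plus; this requires a careful comparison of the spectral-flow conventions used to orient the perturbed moduli on the two sides, and I expect this bookkeeping to be the most subtle point of the argument.
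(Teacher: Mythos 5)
You have missed the simplification that makes this verification essentially immediate, and as a result you have set yourself a much harder (and not actually completed) problem. The hypothesis of the conjecture is that $X$ has the $\mathbb{Z}[\mathbb{Z}]$-homology of $S^1\times S^3$, i.e.\ the infinite cyclic cover $\widetilde X$ is a homology $S^3$. Under the circle-action hypotheses of Theorem~\ref{main} this forces the Alexander polynomial $\Delta_Y(t)$ to be trivial, so $\Delta_Y''(1)=0$ and Theorem~\ref{main} gives $\lambda_{SW}(X)=0-0=0$ outright. Your reading of the left-hand side as the possibly nonzero quantity $\Delta_Y''(1)$ ignores the $\mathbb{Z}[\mathbb{Z}]$-homology hypothesis, and this is what pushes you into trying to prove the nontrivial identity $\lambda_{FO}(X)=-\Delta_Y''(1)$. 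That identity is not only unnecessary here, it is not even well posed in the generality you intend: the Furuta--Ohta invariant requires $\widetilde X$ to be a homology sphere (equivalently, trivial Alexander polynomial) for the irreducible ASD moduli space on the trivial $SU(2)$ bundle over $X$ to yield a well-defined count, so there is no regime in which both sides of your proposed equation are defined and nonzero.

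For the right-hand side, the argument in the text simply quotes the known vanishing result of Ruberman--Saveliev \cite{cassontype}: when $X$ is a $\mathbb{Z}[\mathbb{Z}]$-homology $S^1\times S^3$ admitting a free circle action, $\lambda_{FO}(X)=0$. Your proposed $S^1$-equivariant reduction of the ASD moduli to a representation count on $\widetilde Y$, with the attendant difficulties you yourself flag (the $b_1=1$ reducibles, admissible perturbations, orientation and spectral-flow bookkeeping), is left as a program rather than a proof; none of those obstacles is resolved in your write-up, so the right-hand side is not actually computed. The correct and complete route is: both sides vanish --- the left by Theorem~\ref{main} together with triviality of $\Delta_Y$, the right by \cite{cassontype} --- and hence $\lambda_{SW}(X)=0=-\lambda_{FO}(X)$ in the free circle action case.
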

Here the $\lambda_{FO}(X)$ denotes the Furuta-Ohta invariant introduced in \cite{foinvariant} defined by counting the points in the moduli space of irreducible ASD connections on a trivial $SU(2)$ bundle $P\rightarrow X$ and is zero when the assumptions in Theorem \ref{main} are satisfied \cite{cassontype}. 

\section{Seiberg-Witten invariant}
\subsection{Moduli space over circle bundle}
Let $X$ be a smooth 4-manifold admitting a free circle action and the circle bundle $\pi: X\rightarrow Y$ has Euler number 1. We can equip $X$ with a metric of the form $g_X=\eta\otimes\eta\oplus\pi^*g_Y$ where $g_Y$ is a any metric on $Y$ and $i\eta$ is a connection 1-form of the circle bundle $\pi: X\rightarrow Y$. Under these settings, Scott Baldridge proved in \cite{baldridge} that the $Spin^c$-structures $\xi$ for which $SW_X(\xi)\neq 0$ are pulled back from the ones on $Y$ and the moduli space of $Y$ equipped with the metric $g_Y$ is homeomorphic (or orientation preserving diffeomorphic for well chosen metric and perturbation) to a component of the moduli space of $X$ equipped with the metric $g_X=\eta\otimes\eta\oplus\pi^*g_Y$. 

\begin{theorem}(Baldridge \cite{baldridge})\label{bal}
The pullback map induces a homeomorphism
\[\pi^*: \mathcal{M}^*(Y,g_Y,\delta)\rightarrow \mathcal{N}^*(X,g_X,\pi^*(\delta)^+).\] There exists pairs $(g_Y,\delta)$ such that the two moduli spaces are smooth and $\pi^*$ is an orientation-preserving diffeomorphism. 
\end{theorem}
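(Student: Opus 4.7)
My plan is to proceed in three steps: construct the pullback map explicitly using the Riemannian submersion structure of $g_X$, establish that it is a continuous bijection onto a single component $\mathcal{N}^*$, and finally upgrade to an orientation-preserving diffeomorphism for generic choices of $(g_Y,\delta)$.

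For the first step, since $g_X = \eta\otimes\eta \oplus \pi^* g_Y$ is a Riemannian submersion metric with totally geodesic circle fibres, I would compute how the 4D Dirac operator $\Dir^+_X$ and the self-dual curvature map $F^+$ act on $S^1$-invariant configurations. The hypothesis $e=1$ makes the pullback map $\pi^*$ on $\text{Spin}^c$ structures well-defined, and the positive spinor bundle of a pullback structure $\pi^*\xi$ decomposes so that $S^1$-invariant sections correspond canonically to spinors on $Y$ for the structure $\xi$. An explicit local computation should show that on such invariant sections, $\Dir^+_X$ agrees with the 3D Dirac operator on $Y$ up to a zero-order term controlled by $d\eta$, and that the self-dual part $F_A^+$ encodes precisely the 3D curvature equation $*F_A - \sigma(\phi) = \delta$ when the perturbation on $X$ is taken to be $\pi^*(\delta)^+$. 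This produces a continuous map $\pi^*: \mathcal{M}^*(Y,g_Y,\delta) \to \mathcal{N}^*(X,g_X,\pi^*(\delta)^+)$ landing in one moduli component.

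Injectivity modulo gauge is routine: an $S^1$-invariant gauge transformation of a pulled-back configuration descends to a gauge transformation on $Y$, and any gauge transformation on $X$ relating two pulled-back solutions may be averaged over the $S^1$-action to an invariant one. The main obstacle is surjectivity onto $\mathcal{N}^*$: one must show that every irreducible solution in this component is, up to gauge, the pullback of a 3D solution. My strategy would be to establish that such solutions are $S^1$-invariant by averaging in an $S^1$-equivariant Coulomb slice, using the $S^1$-equivariance of the SW equations with respect to $g_X$; a uniqueness argument for the gauge-fixing then rules out non-invariant orbits in the same component. One also has to rule out the presence in $\mathcal{N}^*$ of orbits whose underlying $\text{Spin}^c$ structure is not pulled back, which should follow from a curvature-energy obstruction making the self-dual curvature equation inconsistent when $c_1$ has a non-trivial fibrewise component.

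For the second statement, choose $(g_Y,\delta)$ generically so that $\mathcal{M}^*(Y,g_Y,\delta)$ is transversely cut out. Fourier decomposition along the fibres splits the 4D deformation complex at a pullback solution into the $S^1$-invariant part, which is naturally isomorphic to the 3D deformation complex, and a direct sum over nonzero Fourier modes. For generic pairs, the nonzero-mode contributions have trivial cohomology, so $\pi^*$ induces an isomorphism of deformation complexes and is therefore a local diffeomorphism. The orientations on both moduli spaces come from trivializations of the determinant lines of the linearized operators; since the nonzero Fourier modes contribute in conjugate pairs with canonical complex structures, the induced isomorphism on determinant lines preserves orientations, giving the desired orientation-preserving diffeomorphism.
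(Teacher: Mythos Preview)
Your proposal misreads what $\mathcal{N}^*(X,g_X,\pi^*(\delta)^+)$ is. In the paper (following Baldridge) it is \emph{defined} as the image of $\pi^*$ inside $\mathcal{M}^*(X,g_X,\pi^*(\delta)^+)$, and is then identified with the locus of solutions whose $\mathrm{Spin}^c$ structure is pulled back from $Y$. So surjectivity onto $\mathcal{N}^*$ is tautological; there is nothing to prove, and the entire paragraph you describe as ``the main obstacle'' (averaging solutions in a Coulomb slice to force $S^1$-invariance, ruling out non-pulled-back $\mathrm{Spin}^c$ structures by an energy argument) is unnecessary for the theorem as stated. Showing that \emph{every} irreducible solution for a pulled-back $\mathrm{Spin}^c$ structure is gauge-equivalent to an invariant one is a genuinely harder and separate statement that Baldridge does not claim here.

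Your injectivity argument also has a gap. You propose to take a gauge transformation $g\in\map(X,S^1)$ relating two pulled-back solutions and ``average it over the $S^1$-action'' to an invariant one. But gauge transformations are $S^1$-valued, and naive averaging over the fibre does not produce an $S^1$-valued map; if $g$ represents a nontrivial class in $H^1(X;\mathbb{Z})$ there is no global lift to $\mathbb{R}$ to average either. The paper (again following Baldridge) instead proves directly that such a $g$ is already constant along the fibres: viewing $g$ as a section of $\pi^*(\ed(\det S))$ and using that $g\Phi=\Phi'$ with both $\Phi,\Phi'$ pulled back, one gets $(\nabla^{\ed}_T g)\Phi=0$ for the vertical unit field $T$; since the irreducible spinor $\Phi$ is nonzero on a dense open set by unique continuation for the Dirac equation, this forces $\nabla^{\ed}_T g=0$, so $g$ descends to $Y$.

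Your outline for the diffeomorphism and orientation statements---Fourier-decomposing the $4$-dimensional deformation complex along the fibre, matching the invariant part with the $3$-dimensional complex, and noting that nonzero modes come in complex-conjugate pairs---is the right idea and is essentially what Baldridge carries out; the paper simply cites the conclusion $\pi^*(\mathcal{H}^1_{\mathcal{S}_0})=\mathcal{H}^1_{\mathcal{S}}$ and handles the homology orientation via the Gysin sequence.
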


To get an idea of the proof of this theorem, we consider the projection map $\pi: X\rightarrow Y$ which induces a map between moduli space $\pi^*: \mathcal{M}^*(Y,g_Y)\rightarrow\mathcal{M}^*(X,g_X)$. Given a proper perturbation 2-form $\delta$ on $Y$, and pull-back perturbation 2-form $\pi^*(\delta)$ then  $\pi^*: \mathcal{M}^*(Y,g_Y,\delta)\rightarrow\mathcal{M}^*(X,g_X,\pi^*(\delta)^+)$ is a  map between smooth moduli spaces. The map $\pi^*$ is injective. This can be seen by considering two pairs of solutions of Seiberg-Witten equation over $Y$ which are pulled back to solutions over $X$, $(A,\Phi)$, $(A',\Phi')$ which differ by a gauge transformation $g\in \map(X,S^1)$. It remains to check that $g$ is a pull-back from a gauge transformation $g'\in \map(Y, S^1)$. It's not hard to see $g$ can be viewed as a section of $\pi^*(\ed(\det (S)))$ where $S$ is the spinor bundle over $Y$. The connection $\nabla^{\ed}$ on the bundle $\ed(\pi^*(W))$ satisfies 
\[
(\nabla^{\ed}_T) g(\Phi)=\nabla^{A}_T(g\Phi)-g\nabla^A_T(\Phi)=0,
\]
where $T$ is a vertical vector field of unit length along the fiber, because $g\Phi=\Phi'$ is a pull-back from spinor over $Y$. By ellipticity of the first Seiberg-Witten equation $D_A\Phi=0$ as a function of $\Phi$, we know $\Phi\neq 0$ on a dense open subset. Therefore $\nabla^{\ed}_T g=0$ meaning $g$ is constant along the fiber. This shows $g$ is a pull-back from a gauge transformation $g'\in \map(Y,S^1)$. The above argument is due to Baldridge in \cite{baldridge}.

As in \cite{baldridge}, the image of $\pi^*: \mathcal{M}^*(Y,g_Y,\delta)\rightarrow\mathcal{M}^*(X,g_X,\pi^*(\delta)^+)$ is denoted by $\mathcal{N}^*(X,g_X, \pi^*(\delta)^+)$, which is the component in $\mathcal{M}^*(X,g_X, \pi^*(\delta)^+)$ with $Spin^c$ structures pulled back from $Y$. To prove that $\pi^*$ is a diffeomorphism, we need a description of the tangent space to the moduli space at a solution $\mathcal{S}_0$. This is done by considering the deformation complex at $S$ and identifying the tangent space to $\mathcal{S}$ with $\mathcal{H}^1_{\mathcal{S}}$, the first cohomology group of the complex. It's proved in \cite{baldridge} that $\pi^*(\mathcal{H}^1_{\mathcal{S}_0})=\mathcal{H}^1_{\mathcal{S}}$ where $\mathcal{S}$ is an irreducible solution over $X$ and $\mathcal{S}_0$ is a solution over $Y$ such that $\mathcal{S}=\pi^*(\mathcal{S}_0)$. In addition, we can see that $\pi: X\rightarrow Y$ preserves the homology orientation. Given an ordered base of $H^1(Y;\mathbb{R})$, we can use Gysin sequence to see that $H^1(X;\mathbb{R})$ is isomorphic to $H^1(Y;\mathbb{R})$, so an orientation in $H^1(Y;\mathbb{R})$ gives one in $H^1(X;\mathbb{R})$. Note that the homology orientation for $X$ is an orientation for the vector space $H^1(X;\mathbb{R})\oplus H^+(X;\mathbb{R})=H^1(X;\mathbb{R})$ when $X$ is a homology $S^3\times S^1$.

\subsection{Seiberg-Witten invariant for $b_1(Y)=1$}
The Baldridge theorem above helps us understand SW-invariants over total space of circle bundle in terms of those over the base space. In this subsection, we will focus on the 3-dimensional SW-invariant over the base space of the circle bundle $\pi: X\rightarrow Y$. Note that the Baldridge theorem has no restriction on $b_1$. When $b_1(Y)>1$, the SW-invariant is a diffeomorphism invariant while in the case when $b_1(Y)=1$, there is a chamber structure and we have two invariants $SW^{\pm}_Y$ and they are related by the following fundamental wall-crossing formula by Meng and Taubes

\begin{theorem}(Meng-Taubes \cite{mengtaubes})
Let $Y$ be the homology $S^2\times S^1$ obtained from 0-framed surgery on a knot $K\subset S^3$. Then 
\[
SW^{-}_{Y}\cdot (t-t^{-1})^2=\Delta_K(t^2),
\]
where $t=t_T$ for the generator $T$ of $H^2(Y;\mathbb{Z})=\mathbb{Z}$ satisfying $T\cdot\lambda=1$.  
\end{theorem}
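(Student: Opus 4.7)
The plan is to prove this theorem by the standard two-step route: first identify the three-dimensional Seiberg-Witten invariant $SW^{-}_Y$ with the Milnor-Turaev torsion $\tau(Y)$, and then invoke the classical torsion computation for zero-surgery manifolds.

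I would begin by setting up the Seiberg-Witten moduli space on $Y$ with its chamber structure. Since $b_1(Y)=1$, the reducible locus is cut into two chambers by a wall in the space of perturbations, and $SW^{-}_Y$ is defined by counting irreducible monopoles (with signs) in the chamber where the harmonic representative of the perturbation pairs negatively with the generator $T$. Organizing these counts by $\mathrm{Spin}^{c}$-structure produces the Laurent series $SW^{-}_Y \in \mathbb{Z}[H^2(Y;\mathbb{Z})]$ appearing on the left-hand side.

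The heart of the argument is the identification $SW^{-}_Y = \tau(Y)$ (up to a canonical sign and normalization). My preferred route to this identification is to pass to the four-manifold $S^1 \times Y$ and exploit the product structure: $\mathrm{Spin}^c$-structures on $S^1 \times Y$ are pulled back from $Y$, and the $S^1$-invariant monopoles on the product reduce to the three-dimensional monopoles defining $SW^{-}_Y$, so the three-dimensional invariant equals a suitable four-dimensional SW invariant of $S^1 \times Y$. One then invokes the four-dimensional Meng-Taubes theorem, which identifies SW with the Gromov invariant and, via a surgery/gluing argument for the trace of the surgery cobordism from $S^3$ to $Y$, ultimately with the Reidemeister-Milnor torsion of $Y$. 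Alternatively, one may argue more directly at the three-dimensional level by comparing the surgery formula for $SW^{-}_Y$ to the Torres formula for the Alexander polynomial, using the unknot case $Y = S^1 \times S^2$ as a base point where both sides vanish.

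Once $SW^{-}_Y = \tau(Y)$ is established, the theorem follows from the classical computation of the torsion of zero-surgery on a knot: after identifying $t$ with the generator of $H^2(Y;\mathbb{Z})$ dual to $\lambda$, one has $\tau(Y) = \Delta_K(t^2)/(t - t^{-1})^2$, and clearing the denominator yields the stated formula. The main obstacle, by far, is the identification of $SW^{-}_Y$ with the torsion: one must carefully track wall-crossing contributions, the conjugation symmetry of $\mathrm{Spin}^c$-structures, and the sign and normalization conventions so that the minus chamber (rather than the plus chamber) appears on the left-hand side and so that the symmetric normalization $\Delta_K(t) = \Delta_K(t^{-1})$ of the Alexander polynomial is reproduced on the right. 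The computation of $\tau$ for zero-surgery, while classical, also requires choosing a compatible Euler structure and homology orientation so that the equality holds on the nose rather than only up to $\pm t^k$.
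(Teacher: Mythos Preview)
The paper does not prove this theorem at all: it is quoted as a known result of Meng and Taubes \cite{mengtaubes} and is used purely as input to the computation of $\#\mathcal{M}(X,g_X,\beta)$. There is therefore nothing in the paper to compare your proposal against.

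That said, your sketch is a reasonable outline of the original Meng--Taubes argument (identify $SW^{-}_Y$ with the Milnor--Turaev torsion, then invoke the classical formula $\tau(Y)=\Delta_K(t^2)/(t-t^{-1})^2$ for zero-surgery), and you correctly flag the genuinely hard part as the $SW=\tau$ identification together with its attendant sign, chamber, and normalization bookkeeping. If your goal were to actually prove the statement rather than compare with the paper, the sketch would need substantial expansion at precisely that step: the surgery/gluing argument you allude to (or the direct comparison of surgery formulas) is the entire content of \cite{mengtaubes} and is not something one can summarize in a paragraph. But for the purposes of this paper the theorem is a black box, and no proof is expected.
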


When $Y$ is homology $S^2\times S^1$, there is no torsion element in $H_*(Y)$, the $spin^c$-structures $\mathfrak{s}$ over $Y$ are classified by $c_1(\mathfrak{s}):=c_1(det(S))\in H^2(Y,\mathbb{Z})$. We know $c_1(\mathfrak{s})$ is an even class for it is an  integral lift of Stiefel -Whitney class $w_2$. So there is a 1-1 correspondence between $k\in\mathbb{Z}$ and $spin^c$-structures $\mathfrak{s}_k$ with $c_1(\mathfrak{s}_k)=2k$. The pullback $spin^c$-structure $\pi^*\mathfrak{s}_k$ over $X$ are equivalent if $X$ is homology $S^3\times S^1$, we will denote this unique $spin^c$ structure by $\xi_0$. In view of Theorem \ref{bal}, the Seiberg-Witten invariant of the $spin^c$-structure $\xi_0$ over $X$ is equal to the sum of the invariants $SW_Y(\mathfrak{s}_k)$ over all the $spin^c$-structures on $Y$. 

In general, there is a small-perturbation Seiberg-Witten invariant defined for 3-manifold $Y$ with $b_1(Y)=1$. It is defined using Seiberg-Witten equation with an exact perturbation. In the case when $b_1(Y)=1$, the existence of reducible solution gives $F_A=\delta$ where $\delta$ is the perturbation 2-form. This condition gives a codimension 1 "wall" in $H^2(Y;\mathbb{R})$ since it's equivalent to $(2\pi c_1(\mathfrak{s})+\delta)\cdot \lambda=0$ for $\lambda$ a generator of $H^1(Y;\mathbb{R})$ dual to the orientation of $H_1(Y;\mathbb{R})$. When the perturbation form $\delta$ is an exact 2-form, the small-perturbation Seiberg-Witten invariant $SW^0_Y(\mathfrak{s}_k)$ \cite{fintushel} is well defined for $Y$ with $b_1(Y)=1$
\[
  SW^0_Y(\mathfrak{s}_k) =
  \begin{cases}
    SW^+_Y(\mathfrak{s}_k)           & \text{if $k>0$} \\
    SW^-_Y(\mathfrak{s}_k)           & \text{if $k<0$} 
  \end{cases}.
\]
To see that the Seiberg-Witten invariant of the $spin^c$-structure $\xi_0$ over $X$ with parameter $(g_X,\pi^*(\delta)^+)$ is equal to the sum of the invariants $SW^0_Y(\mathfrak{s}_k)$ over all the $spin^c$-structures on $Y$ with parameter $(g_Y,\delta)$, we need to verify first that both sides are well defined under suitable choice of $(g_Y, \delta)$. Consider the exact perturbation $\delta=d\alpha\in \Omega^2(Y;\mathbb{R})$, the pull-back $\pi^*\delta=\pi^*(d\alpha)=d\pi^*\alpha\in \Omega^2(X;\mathbb{R})$ to $X$ is a $S^1$-invariant exact perturbation 2-form after projecting to self-dual component. Since $Y$ is three dimensional, we know the expected dimension of the moduli space is $0$ and in addition, we can find metric and exact perturbation $(g_Y, \delta)$ so that the moduli space is smooth without any reducible solution. In terms of the deformation complex associated with the gauge action and Seiberg-Witten equation
\[
0\rightarrow\Omega^0(Y;i\mathbb{R})\overset{\delta^0}{\rightarrow} \Omega^1(Y;i\mathbb{R})\oplus \Gamma(S)\overset{\delta^1}{\rightarrow} \Omega^1(Y;i\mathbb{R})\oplus \Gamma(S)\rightarrow 0,
\]
where the first map $\delta^0$ at a solution $(A_0,\Phi_0)$ is given by the derivative of gauge group action, 
\[
\delta^0(\gamma)=(2d\gamma,-\gamma\Phi_0),
\]
and the second map $\delta^1$ at a solution $(A_0,\Phi_0)$ is given by 
\[
\delta^1(a,\phi)=(*(da-\frac{1}{2}\sigma(\Phi_0,\phi)), D_{A_0+\alpha}\phi+\frac{1}{2}a\cdot\Phi_0),
\]
we know that the $H^0_{(A_0,\Phi_0)}=H^1_{(A_0,\Phi_0)}=H^2_{(A_0,\Phi_0)}=0$ for the complex above by our assumption on $(g_Y,\delta)$. We have a corresponding complex on $X$ 
\[
0\rightarrow \Omega^0(X; i\mathbb{R})\rightarrow \Omega^1(X;i\mathbb{R})\oplus\Gamma(S^+)\rightarrow \Omega^1(X;i\mathbb{R})\oplus\Gamma(S^-)\rightarrow 0
\]
at a solution $(A,\Phi)=\pi^*(A_0,\Phi_0)$ defined in a similar way. By Baldridge's theroem in \cite{baldridge}, we know that under the parameter $(g_X, \pi^*(\delta)^+)$, $\pi$ induces an isomorphism between $H^1_{(A_0,\Phi_0)}$ and $H^1_{(A,\Phi)}$, thus $H^1_{(A,\Phi)}=0$. When $X$ is a 4-manifold with free circle action, the expect dimension of the moduli space is 0 by direct computation. So at each irreducible solution $(A,\Phi)\in \mathcal{M}^*(X, g_X, \pi^*(\delta)^+)$, $H^0_{(A,\Phi)}=H^1_{(A,\Phi)}=H^2_{(A,\Phi)}=0$. Each equivalence class of solution in $\mathcal{M}^*(X, g_X, \pi^*(\delta)^+)$ is then an isolated point with smooth neighborhood modeled on the zero of the Kuranishi map $H^1_{(A, \Phi)}\rightarrow H^2_{(A,\Phi)}$. So there is a well-defined number (not an invariant) $SW_X(\xi_0,g_X, \pi^*(\delta)^+)$ defined by taking the algebraic count of points in $\mathcal{M}^*(X,g_X,\pi^*(\delta)^+)$. 

Now using the small-perturbation Seiberg-Witten invariant, the sum of Seiberg-Witten invariant over all $spin^c$-structures  $\mathfrak{s}_k$ on $Y$ is equal to 
\[
\sum_{k\in\mathbb{Z}} SW^0_Y(\mathfrak{s}_k)=\sum_{k\in\mathbb{Z}} a_{1+|k|}+2a_{2+|k|}+3a_{3+|k|}+\cdots,
\]
where $a_i$'s on the right hand side are coefficients of the normalized Alexander polynomial of $Y$. It's not hard to check that the right hand side is the $\Delta''_Y(1)$. Therefore, by the discussion above, we know that the Seiberg-Witten invariant of the $spin^c$-structure $\xi_0$ over $X$ with parameter $(g_X,\pi^*(\delta)^+)$ is equal to $\Delta''_Y(1)$. 

\section{Correction Term}

\subsection{Neck Stretching Operation} The correction term can be simplified by using the neck stretching operation discussed in detail in \cite{split}. Let $M$ be the 3-submanifold reperesenting the Poincare dual to the generator of $H^1(X;\mathbb{Z})$. The metric on $X$ induces a metric on $M$ by restriction. Assuming that the metric $g_X$ is a product in a neighborhood $[-\epsilon,\epsilon]\times M$, $\epsilon>0$. Consider the manifold "with long neck" 
\[
X_R=W\cup ([0,R]\times M),
\]
where $W$ is the cobordism obtained by cutting $X$ along $Y$. It's prove in \cite{split} that under certain assumptions, this long neck manifold $X_R$ with metric $g_R$ obtained by gluing metric $g_X|_W$ and product metric on the cylinder $[0,R]\times M$ can be used to compute the correction term. 
\begin{theorem}[\cite{split}]\label{neckstretch}
\[
\omega(X_R,g_R)=\ind \Dir^{+}(Z_{+}(M),g,\beta)+\sigma(Z)/8,
\]
where $Z_{+}(M)=Z\cup ([0,\infty)\times M)$ and $Z$ is a spin $4$-manifold with $\partial Z=Y$. It remains to check that the metric $g_X=\pi^*(g_Y)+\eta\otimes\eta$ used in computing the Seiberg-Witten invariant satisfies the following assumption from \cite{split}. 
\end{theorem}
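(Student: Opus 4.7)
The statement is essentially an invocation of the neck-stretching theorem of \cite{split}, so the real work of the proof is not rederiving that formula but verifying that its hypotheses hold for the particular metric $g_X=\pi^*(g_Y)+\eta\otimes\eta$. My plan is therefore first to identify the precise analytic assumption required by \cite{split} and then check it in the circle-bundle setting.

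In the end-periodic framework the key hypothesis is a spectral one on the cross-section: the three-dimensional Dirac operator on $M$, for the spin structure induced from $X$, should have trivial kernel, so that $\Dir^{+}(Z_{+}(M),g,\beta)$ is Fredholm on the cylindrical-end manifold and its index is well-defined. Granting this, the neck-stretching result of \cite{split} identifies the correction term $\omega(X_R,g_R)$ with $\ind\Dir^{+}(Z_{+}(M),g,\beta)+\sigma(Z)/8$ in the limit $R\to\infty$, the signature piece coming from the fact that $Z$ is a compact spin filling of $M$.

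To verify the hypothesis I would use the $S^1$-structure inherited by $M$. Since $M$ represents the homology generator in $H_3(X;\mathbb{Z})$ and $X$ is an $S^1$-bundle over $Y$ of Euler number one, $M$ itself is an $S^1$-bundle over the surface $\Sigma\subset Y$ generating $H_2(Y;\mathbb{Z})$. The restricted metric $\eta|_M\otimes\eta|_M+\pi^*(g_\Sigma)$ is $S^1$-invariant, so the Dirac operator on $M$ splits according to the isotypic decomposition of the spinor bundle under the fiberwise $S^1$-action. I would examine each weight summand separately, expressing it in terms of the twisted Dirac operator on $\Sigma$ plus a zeroth-order term determined by the weight and the connection $\eta$. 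The weight-zero summand corresponds to pullbacks from $\Sigma$ and is controlled by the genus of $\Sigma$, while nonzero weights acquire a large shift that prevents the kernel from being nontrivial.

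The principal obstacle is therefore the weight-zero piece, where no automatic spectral gap is available. I plan to address this by choosing $g_Y$ (and hence $g_\Sigma$) within the admissible family so that the twisted Dirac operator on $\Sigma$ is invertible — for example, by a small generic perturbation of the metric within the regular set, which does not change $\#\mathcal{M}(X,g_X,\beta)$ computed earlier and alters $\omega(X_R,g_R)$ only through a spectral flow that vanishes in our situation. Once the no-kernel condition on $M$ is secured, the conclusion is immediate from the \cite{split} formula.
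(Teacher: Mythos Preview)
You have misidentified the hypothesis that \cite{split} actually requires. The assumption is \emph{not} that the three-dimensional Dirac operator on the cross-section $M$ has trivial kernel; it is that the four-dimensional spin Dirac operator
\[
\Dir^{+}(W_{\infty},g_{\infty}): L^{2}_{1}(W_{\infty};S^{+})\longrightarrow L^{2}(W_{\infty};S^{-})
\]
is invertible, where $W_{\infty}=((-\infty,0]\times M)\cup W\cup([0,\infty)\times M)$ is the cobordism with two cylindrical ends. The paper does not attempt to verify this directly via harmonic analysis on $M$; it simply cites Theorem~10.3 of \cite{split} for the existence of such a metric on an integral homology $S^{1}\times S^{3}$, and then spends its effort elsewhere: because the eta-invariant computations of \cite{etacircle} and \cite{ouyang} require the \emph{partially rescaled} metrics $g_{X}^{r}=\pi^{*}g_{Y}+r^{2}\eta\otimes\eta$ with $r\to 0$, the paper must show that the first-eigenvalue estimate underlying \cite{split} holds \emph{uniformly} in $r$. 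This is done via Dai's adiabatic spectral asymptotics \cite{dai}, under the generically achievable hypothesis $\ker D_{Y}=0$ on the three-dimensional base. Your proposal contains none of this adiabatic machinery.

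Your alternative route also has a genuine obstruction at the weight-zero step. The $S^{1}$-invariant spinors on $M$ are governed by the Dirac operator on the surface $\Sigma$, and the parity of $\dim\ker D_{\Sigma}=h^{0}(\Sigma,K_{\Sigma}^{1/2})$ is a deformation invariant of the spin structure (Atiyah). If the induced spin structure on $\Sigma$ is odd, no perturbation of $g_{\Sigma}$ can make that kernel vanish, so the sentence ``choose $g_{\Sigma}$ so that the twisted Dirac operator on $\Sigma$ is invertible'' can simply fail. Indeed the paper's own computation records $h_{\Dir}=2h_{1/2}$, which is typically nonzero; the correction term vanishes not because $h_{\Dir}=0$ but because of the cancellation $-\tfrac{1}{2}h_{\Dir}+h_{1/2}=0$. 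Likewise, your claim that nonzero weights ``acquire a large shift'' is only justified in the adiabatic regime $r\ll 1$; at $r=1$ there is no such automatic gap. In short, both the hypothesis you aim to check and the mechanism you propose for checking it differ from what the argument actually needs.
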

\begin{assum}
The Dirac operator 
\[
\Dir^+(W_{\infty},g_{\infty}): L^2_1(W_{\infty}; S^+)\rightarrow L^2(W_{\infty};S^{-})
\]
is invertible, where $W_{\infty}=((-\infty,0]\times M)\cup W\cup ([0,+\infty)\times M)$ and $g_{\infty}$ is the metric on $W_{\infty}$ induced by $g_X$.
\end{assum}
This metric in the above assumption exists in the case when $X$ is an integral homology $S^1\times S^3$ by the Theorem 10.3 in \cite{split}. 
So in the following sections, we will focus on computing $\omega(X_R,g_R)=\ind \Dir^{+}(Z_{+}(M),g,\beta)+\sigma(Z)/8$. 
Using Atiyah-Patodi-Singer index theorem \cite{spectralI}, the correction term can be computed as 
\begin{align*}
\omega(X_R,g_R)&=\ind \Dir^{+}(Z_+(M),g,\beta)+\sigma(Z)/8\\
&= \bigg(\int_Z \hat{A}(p) -\frac{1}{2}h_{\Dir}-\frac{1}{2}\eta_{\Dir}(M)\bigg)+\frac{1}{8}\bigg(\int_Z L(p) -\eta_{ \sign}(M)\bigg)\\
&= -\frac{1}{2}h_{\Dir}-\frac{1}{2}\eta_{\Dir}(M)-\frac{1}{8}\eta_{\sign}(M). 
\end{align*}
Here $h_{\Dir}:=\dim \ker(\Dir^+|_M)$

\subsection{Eta Invariants of Dirac Operator}
Let $M$ be the restriction of the circle bundle $X\rightarrow Y$ to a closed surface $\Sigma$ which generates $H_2(Y;\mathbb{Z})$.  Equip $\Sigma$ with a constant sectional curvature metric $g_{\Sigma}$ such that $\vol(\Sigma)=\pi$. The induced metric on $M$ by restriction can be written as $g_M=\pi^*g_{\Sigma}\oplus \eta\otimes\eta$ and using this metric we can split $T^*M=\langle \eta\rangle\oplus \pi^*T^*\Sigma$ orthogonally. By rescaling the length of the fiber, we can form a family of metrics, parametrized by fiber length, 
\[g^r_M=\pi^*g_{\Sigma}\oplus \eta_r\otimes\eta_r,\]
where $\eta_r=r\eta$. For each $g_M^r$, there exists a Levi-Civita connection $\nabla^r$ which can be written in simple matrix form in well-chosen local frames. In \cite{etacircle}, the local orthonormal frame for $T^*M=\langle \eta\rangle\oplus \pi^*T^*\Sigma$ is chosen to be $(\eta_r,\eta^1,\eta^2)$ so that $\eta^i=\pi^*\theta^i, i=1,2$ where $\theta^i$ is a local orthonormal frame of $T^*\Sigma$ satisfying 
\[
d\theta^1=\kappa \theta^1\wedge\theta^2
\]
and 
\[
d\theta^2=0.
\]
The existence of this local frame comes from the classification of space forms. The connection 1-form under this local frame can be written in matrix form as 
\begin{equation}\omega_r= 
\begin{bmatrix}
0 & -r\eta^2 & -r\eta^1\\
r\eta^2 & 0 & r\eta_r-\kappa\eta^1\\
r\eta^1 & -r\eta_r+\kappa\eta^1 & 0
\end{bmatrix}.
\end{equation}

 In \cite{etacircle}, Nicolaescu studied the Dirac operators of type $\mathcal{D}_r$ associated to the connection with local connection 1-form of the above form when $r$ is small using the adiabatic limit technique. Note that Theorem \ref{neckstretch} holds when we use the partial rescaling metrics $g_X^r=\pi^*(g_Y)+r^2\eta\otimes\eta$ for arbitrarily small positive $r$. To see this, we use a result in \cite{dai} on the asymptotic behavior of spectrum of $\mathcal{D}_r$. Let $\{\lambda_{r}\}$ denote the spectrum of $\mathcal{D}_r$, by Dai's result of Theorem 1.5 in \cite{dai}, $\lambda_r$ is analytic on $r$ and either $|\lambda_{r}|\geq \frac{1}{r}\lambda_0\gg 0$ for $r$ sufficiently small or has the asymptotic formula below as 
 \begin{equation}\label{asymp}
 \lambda_{r}\sim \lambda_1+\lambda_2 r+...
 \end{equation}
 and when $\lambda_1\neq 0$, the spectrum of $\mathcal{D}_r$ satisfies 
 \[
 |\lambda_r|\geq \frac{1}{2}|\lambda_1| \text{ when } r \text{ is sufficiently close to 0}. 
 \]
 It remains to deal with the case when $\lambda_1=0$, in which case $\lambda_r$ decays at least linearly in $r$. It's sufficient to show that the first eigenvalue estimate Proposition 7.1 in \cite{split} holds uniformly for $r$ when $r$ is small. By the same idea in the proof of Proposition 7.1 and the result in \cite{dai}, the linear operator $T_{+,r}(\lambda, R): V_i(Y_2)\oplus V_i(Y_1)\rightarrow V_i(Y_2)\oplus V_i(Y_1)$ under the orthonormal eigenspinors of $\mathcal{D}_r$ has the matrix of the form
 \[
B_{i,r}=\frac{1}{(\lambda_{i,r}-\omega_{i,r})-(\lambda_{i,r}+\omega_{i,r})e^{2\omega_{i,r} R}} \begin{pmatrix}
\lambda(e^{2\omega_{i,r}R}-1) & -2\omega_{i,r}e^{\omega_{i,r} R}\\
-2\omega_{i,r}e^{\omega_{i,r}R} & \lambda(1-e^{2\omega_{i,r} R})
\end{pmatrix},
\]
where $\lambda_{i,r}$ denotes the $i$-th eigenvalue of $\mathcal{D}_r$ and $\omega_{i,r}=\sqrt{\lambda^2_{i,r}+\lambda^2}$. Therefore, the operator norm of $T_{+,r}(\lambda, R)$ can be estimated by 
\begin{equation}\label{norm1}
\bigg|\frac{2\omega_{i,r}e^{\omega_{i,r} R}}{(\lambda_{i,r}-\omega_{i,r})-(\lambda_{i,r}+\omega_{i,r})e^{2\omega_{i,r} R}}\bigg|\leq 2\cdot \frac{e^{\omega_{i,r}R}}{e^{2\omega_{i,r}R}-1}\leq 2\cdot \frac{e^{\lambda_{0,r}R}}{e^{2\lambda_{0,r}R}-1}
\end{equation}
and 
\begin{equation}\label{norm2}
\bigg|\frac{\lambda(e^{2\omega_{i,r}R}-1)}{(\lambda_{i,r}-\omega_{i,r})-(\lambda_{i,r}+\omega_{i,r})e^{2\omega_{i,r} R}}\bigg|\leq \frac{\lambda}{\omega_{i,r}}\leq \frac{\lambda}{\lambda_{0,r}},
\end{equation}
where $\lambda_{i,r}$ denotes the $i$-th eigenvalue of $\mathcal{D}_r$ and $\lambda$ denotes an eigenvalue of $\mathcal{D}_r$. Then we can prove a adiabatic version of Lemma 7.2 (3) in \cite{split}. 
\begin{prop}
For any $\epsilon>0$, there exists polynomials  $R_0(r)>0$ and $\epsilon_2(r)>0$ in $r$ such that, for any $R\geq R_0(r)$ and $0\leq \lambda<\epsilon_2(r)$, 
\[
|T_{\pm, r}(\lambda, R)|<\epsilon. 
\]
\end{prop}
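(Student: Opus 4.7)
The plan is to combine the two explicit operator-norm bounds (\ref{norm1}) and (\ref{norm2}) with a lower bound on the smallest eigenvalue $\lambda_{0,r}$ of $\mathcal{D}_r$ extracted from the adiabatic asymptotic expansion (\ref{asymp}). Since both (\ref{norm1}) and (\ref{norm2}) are already written in terms of $\lambda_{0,r}$ (exploiting $\omega_{i,r} \geq |\lambda_{i,r}| \geq \lambda_{0,r}$), the estimates are uniform across all eigenspaces $V_i(Y_j)$, and the problem reduces to a quantitative lower bound on $\lambda_{0,r}$ in $r$ followed by the appropriate threshold choices for $R$ and $\lambda$.

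First I would extract from Dai's Theorem 1.5 a lower bound of the form $\lambda_{0,r} \geq c\, r^{k}$, valid for all sufficiently small $r$, where $c > 0$ and $k$ is a nonnegative integer. The theorem supplies a dichotomy: either $|\lambda_{0,r}| \geq \lambda_0 / r$ (where one may take $k=0$), or $\lambda_{0,r}$ admits an analytic expansion $\lambda_1 + \lambda_2 r + \ldots$. In the analytic case, if $\lambda_1 \neq 0$ one gets $\lambda_{0,r} \geq |\lambda_1|/2$; otherwise one takes the first nonvanishing Taylor coefficient to extract a polynomial decay rate. In every case $\lambda_{0,r}$ is bounded below by a positive constant times a power of $r$.

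Second, rewriting the right side of (\ref{norm1}) as $\frac{1}{\sinh(\lambda_{0,r}R)}$, the off-diagonal bound is smaller than $\epsilon/4$ as soon as $\lambda_{0,r} R \geq \log(8/\epsilon)$; substituting the lower bound on $\lambda_{0,r}$ gives the choice $R_0(r) := \frac{\log(8/\epsilon)}{c\, r^{k}}$, which is a polynomial in $1/r$. Likewise (\ref{norm2}) is below $\epsilon/4$ whenever $\lambda < \epsilon_2(r) := \frac{\epsilon c}{4}\, r^{k}$, an honest polynomial in $r$. Summing the four entries of the $2\times 2$ matrix then yields $|T_{\pm,r}(\lambda, R)| < \epsilon$ as required.

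The main obstacle is the case $\lambda_1 = 0$ in (\ref{asymp}), where $\lambda_{0,r}$ collapses polynomially as $r \to 0$ and the naive $r$-independent threshold from the non-adiabatic setting of \cite{split} breaks down. Here the polynomial growth of $R_0(r)$ must exactly match the polynomial decay of $\lambda_{0,r}$, and one must verify that the constants $c$ and $k$ supplied by Dai's expansion can be taken uniformly over the finitely many eigenvalues $\lambda_{i,r}$ that actually enter the adiabatic regime (the rest being bounded below by $\lambda_0/r$ and therefore harmless). Once this uniformity is in place, the remainder of the argument is the elementary $\sinh$/linear estimate sketched above.
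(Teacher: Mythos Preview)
Your proposal is correct and follows essentially the same route as the paper: bound $\lambda_{0,r}$ below by a polynomial in $r$ via the asymptotic expansion (\ref{asymp}), then invert (\ref{norm1}) and (\ref{norm2}) to obtain the thresholds $R_0(r)$ and $\epsilon_2(r)$. Your version is in fact more careful than the paper's brief sketch, and you correctly observe that $R_0(r)$ comes out as a polynomial in $1/r$ rather than in $r$ as the statement literally asserts.
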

\begin{proof}
Using (\ref{norm1}) and (\ref{norm2}), and the asymptotic formula (\ref{asymp}), we have that since $\lambda_{0,r}$ decays to $\lambda_1$ (either 0 or nonzero) at the rate of polynomial $P(r)$ by (\ref{asymp}), then by (\ref{norm2}), we have $\lambda/\lambda_{0,r}\leq \epsilon_2$, which implies that $\lambda\leq \epsilon_2\cdot P(r)$. By (\ref{norm1}), we have $\lambda_{0,r}R\geq R_0$, so $R\geq R_0/\lambda_{0,r}=R_0/P(r)$. 
\end{proof}
Under the assumption that the Dirac operator on the base satisfies $\ker D_Y=0$, then by Theorem 1.5 in \cite{dai}, we know $\lambda_1\in spec(D_Y\otimes \ker D_{S^1})$, thus $\lambda_1\neq 0$ in (\ref{asymp}). Under this assumption, the polynomials $\epsilon_2(r)$ and $R_0(r)$ can be chosen to be independent of $r$, and furthermore the first eigenvalue estimate is uniform in the fiber length $r$. See \cite{split}. 
\begin{prop}\label{first}
Assuming that the spin Dirac operator 
\[
\mathcal{D}^+_r: L^2_2(W_\infty; S^+)\rightarrow L^2_1(W_\infty; S^-)
\]
is an isomorphism for each $r$ and the Levi-Civita Dirac operator on the base satisfies $\ker D_Y=0$. Then there exists constants $R_0>0$ and $\epsilon_1>0$ such that for any $R\geq R_0$, the operator 
\[
\Delta_R=\mathcal{D}^-\mathcal{D}^+: L^2_2(X_R; S^+)\rightarrow L^2(X_R; S^+)
\]
has no eigenvalues in the interval $[0,\epsilon_1^2)$. 
\end{prop}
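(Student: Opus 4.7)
The plan is to follow the strategy of Proposition 7.1 in \cite{split}, carefully tracking $r$-dependence so that all of the estimates descend uniformly in the adiabatic parameter. The hypothesis $\ker D_Y=0$ will promote the previous proposition from an $r$-dependent statement to a uniform one, which is what allows the original argument of \cite{split} to go through in the adiabatic limit.

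First I would argue by contradiction. Suppose no pair $(R_0,\epsilon_1)$ works; then there are sequences $R_n\to\infty$ and $\mu_n\to 0$ with unit-norm eigenspinors $\phi_n\in L^2_2(X_{R_n};S^+)$ of $\Delta_{R_n}$ with eigenvalue bounded above by $\mu_n^2$, equivalently $\mathcal{D}^+\phi_n$ has $L^2$ norm at most $\mu_n$. On the neck $[0,R_n]\times M$ I would expand $\phi_n$ in eigenspinors of $\mathcal{D}_r$: writing $V_i$ for the $i$-th eigenspace with eigenvalue $\lambda_{i,r}$, the coefficient data of $\phi_n$ at the two boundary copies of $M$ are related by the transfer operator $T_{\pm,r}(\mu_n,R_n)$ described above, whose operator norm is controlled by (\ref{norm1}) and (\ref{norm2}).

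The key step is to apply the previous proposition to conclude $|T_{\pm,r}(\mu_n,R_n)|\to 0$, uniformly in $r$. Under the hypothesis $\ker D_Y=0$, Dai's Theorem 1.5 in \cite{dai} excludes $\lambda_1=0$ in the asymptotic expansion (\ref{asymp}), so $\lambda_{0,r}$ is bounded below by a positive constant as $r\to 0$, and the polynomials $R_0(r)$ and $\epsilon_2(r)$ of the previous proposition can be replaced by $r$-independent constants. Hence the neck part of $\phi_n$ decouples from the $W$-part uniformly in $r$. Using a cutoff supported on $W\cup([0,R_n/2]\times M)$, elliptic regularity on $W_\infty$, and the exponential decay of the eigenmode coefficients along the neck coming from the spectral gap $\lambda_{0,r}\ge c>0$, one can (after passing to a subsequence) extract a nonzero $L^2$ limit $\phi_\infty$ on $W_\infty$ satisfying $\mathcal{D}^+\phi_\infty=0$. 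This contradicts the assumed invertibility of $\mathcal{D}^+_r:L^2_2(W_\infty;S^+)\to L^2_1(W_\infty;S^-)$, completing the proof.

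The main obstacle will be converting transfer-operator smallness into genuine $L^2$ decoupling of $\phi_n$ from the neck and then producing a nontrivial $L^2$ limit on $W_\infty$; this requires that the spectral gap $\lambda_{0,r}\ge c>0$ be independent of $r$, which is exactly what Dai's expansion combined with $\ker D_Y=0$ provides. A secondary technical point is propagating the uniform bound from the lowest eigenvalue to all eigenmodes entering the neck expansion, so that summation over modes converges uniformly in $r$; this follows because $\omega_{i,r}\ge|\lambda_{i,r}|\ge\lambda_{0,r}\ge c$ makes the right-hand sides of (\ref{norm1}) and (\ref{norm2}) monotone in $i$.
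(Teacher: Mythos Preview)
Your proposal is correct and follows essentially the same route as the paper. The paper's argument is terser: it observes that the hypothesis $\ker D_Y=0$ forces $\lambda_1\neq 0$ in Dai's expansion (\ref{asymp}), so the polynomials $R_0(r)$ and $\epsilon_2(r)$ of the preceding proposition may be taken independent of $r$, and then simply refers to the proof of Proposition~7.1 in \cite{split} for the remaining first-eigenvalue estimate; you have unpacked that reference into the explicit contradiction/limiting-spinor argument on $W_\infty$, which is exactly what that proof contains.
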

 Using Proposition \ref{first}, we can see Theorem \ref{neckstretch} holds for $g_X^r$ with arbitrarily small $r>0$ by checking the proof in \cite{split}. In step 6 of the proof in \cite{split},
 \[
 K: L^2_1(Z)\oplus (\bigoplus L^2_1(W_i))\rightarrow L^2(Z)\oplus (\bigoplus L^2(W_i))\oplus (\bigoplus V_{-}(M_i^-))\oplus (\bigoplus V_{+}(M_i^+))
 \]
 sending $\phi_0\oplus (\phi_1,\phi_2,...)$ to 
 \[
 0\oplus 0\oplus (-e^{R\mathcal{D}}\pi_{-}\phi_1|_{M_1^+}, -e^{R\mathcal{D}}\pi_{-}\phi_2|_{M_2^+},...)\oplus (-e^{-R\mathcal{D}}\pi_{-}\phi_0|_{M_1^-}, -e^{-R\mathcal{D}}\pi_{-}\phi_1|_{M_2^-},...).
 \]
 Now we can choose $r=r_i$ chosen above, then when $g_X$ is replaced with $g_X^{r_i}$, $\mathcal{D}|_M$ is replaced with $\mathcal{D}_{r_i}|_M$. If the minimum absolute value of eigenvalues of $\mathcal{D}_r$ is uniformly bounded below by $\epsilon_1>0$ for all sufficiently small $r>0$, then we can find a sufficiently large $R$ so that $e^{-R\epsilon_1}$ is sufficiently small, so the same argument works. 
 The following theorem gives a formula of the $\eta(\mathcal{D}_r)$. 
\begin{theorem}[\cite{etacircle}]
For all $0<r\ll r_0$, we have 
\begin{equation}
 \frac{1}{2}\eta(\mathcal{D}_r)=\frac{l}{12}-\sign(l)h_{1/2}+\frac{l}{12}(l^2r^4-\chi r^2) \label{dir}.
\end{equation}

\end{theorem}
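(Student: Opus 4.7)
The plan is to reconstruct Nicolaescu's adiabatic computation from \cite{etacircle}. Because $\pi: M\rightarrow \Sigma$ is a principal $S^1$-bundle with the $S^1$-invariant metric $g_M^r=\pi^*g_\Sigma\oplus \eta_r\otimes\eta_r$, Fourier decomposition along the fiber splits $L^2(M;S_M)=\bigoplus_{k\in\mathbb{Z}}\Gamma(\Sigma; S_\Sigma\otimes L^{\otimes k})$, where $L\rightarrow \Sigma$ is the complex line bundle associated to the circle bundle. Plugging the connection matrix displayed in (2.1) into the standard formula for the Dirac operator identifies $\mathcal{D}_r$ on the $k$-th weight space with a shifted twisted Dirac operator $D_\Sigma^{L^k}+c(T)\,\alpha_k(r)$, where $\alpha_k(r)$ is an explicit polynomial in $k$ and $r$ obtained from the derivatives of $\eta_r$ and the curvature $\kappa$. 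This is the central computational input.

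First I would extract the $r$-independent part by taking the adiabatic limit $r\rightarrow 0$. By the Bismut-Cheeger / Dai framework (already invoked above via Theorem 1.5 of \cite{dai}) the spectrum splits into a ``large'' branch whose contribution to $\eta$ vanishes in the limit and a ``small eigenvalue'' branch controlled by the kernels of the twisted operators $D_\Sigma^{L^k}$. Summing the Riemann-Roch / Atiyah-Singer contributions of $D_\Sigma^{L^k}$ via $\zeta$-regularization yields the constant $\frac{l}{12}$, where $l=\deg L$, while the small-eigenvalue half-integer-twisted kernel contributes $-\sign(l)\,h_{1/2}$.

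Second, I would produce the polynomial-in-$r$ correction by applying the APS variation formula $\frac{d}{dr}\eta(\mathcal{D}_r)=\int_M \omega_r$, where $\omega_r$ is a local transgression form constructed from $\partial_r \mathcal{D}_r$. Using (2.1) one reads off $\partial_r \mathcal{D}_r$, integrates the resulting differential form over the fibers (which produces a $2$-form on $\Sigma$ linear in $r$ and $r^3$ with coefficients in $l$, $\kappa$), and then integrates over $\Sigma$. By Gauss-Bonnet $\int_\Sigma \kappa\,\mathrm{dvol}_\Sigma$ contributes $\chi=\chi(\Sigma)$, and integrating in $r$ using the adiabatic limit as initial value assembles the correction $\frac{l}{12}(l^2r^4-\chi r^2)$.

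The main obstacle will be the simultaneous treatment of all Fourier modes in the adiabatic limit: for $|k|$ large, $\alpha_k(r)\sim k/r$ so the modal eta invariants do not decay and the formal sum $\sum_k \eta(D_\Sigma^{L^k}+c(T)\alpha_k(r))$ converges only after careful $\zeta$-regularization via the heat kernel. Matching this regularized sum with the Atiyah-Singer contribution, and separating out the small-eigenvalue branch that produces the spectral-flow correction $-\sign(l)\,h_{1/2}$ from the generic branch that produces $\frac{l}{12}$, is the delicate step where the geometry of $\Sigma$ (through $\chi$ and $l$) has to be recovered from purely spectral information on $M$.
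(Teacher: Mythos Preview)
Your proposal is correct and follows the same route the paper sketches for Nicolaescu's argument: the polynomial-in-$r$ correction comes from a variation/transgression computation, which the paper phrases as APS on the cylinder $[0,1]\times M$ to obtain $\xi_{r_1}-\xi_{r_0}=SF(\mathcal{D}_{r(u)})+\int_{[0,1]\times M}\widehat{A}(\nabla)$ with vanishing spectral flow, and then evaluates the $\widehat{A}$-integral as the Chern--Simons transgression $T\widehat{A}(\nabla^{r(0)},\nabla^{r(1)})$ --- this is exactly your infinitesimal formula integrated. The paper's sketch does not spell out how the constant $\tfrac{l}{12}-\sign(l)h_{1/2}$ arises at the adiabatic endpoint, so your Fourier-decomposition and $\zeta$-regularization of the modal operators $D_\Sigma^{L^k}$ fills precisely that gap and is indeed what Nicolaescu carries out in \cite{etacircle}.
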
 
Here $h_{1/2}$ is the dimension of global holomorphic sections of $K_{\Sigma}^{1/2}$, the square root of canonical bundle over $\Sigma$ and $l$ is the Euler number of the circle bundle $M\rightarrow\Sigma$. The proof in \cite{etacircle} by Nicolaescu is done by studying the variation of $\eta(\mathcal{D}_r)$ as follows: Let $\xi_r=\frac{1}{2}(\eta(\mathcal{D}_r)+h(\mathcal{D}_r))$ where $h(\mathcal{D}_r)=\text{dim ker}(\mathcal{D}_r|M)$, then by Atiyah-Patodi-Singer index theorem, we can get a variation formula for $\xi_r$ in terms of spectral flow by studying the Dirac operator $\mathcal{D}_u$ on cylinder $[0,1]\times M$ equipped with metric $g=du^2\oplus g_{r(u)}$, $u$ is a coordinate on $[0,1]$, and $\nabla$ is the Levi-Civita connection of $g$. We have 
\[
\xi_{r_1}-\xi_{r_0}=SF(\mathcal{D}_{r(u)})+\int_{[0,1]\times M} \widehat{A}(\nabla).
\]
According to \cite{surg}, $\mathcal{D}_{r(u)}$ can be chosen to be invertible for each $u$, so the term $SF(\mathcal{D}_{r(u)})=0$. The remaining term
\[
\int_{[0,1]\times M} \widehat{A}(\nabla) 
\]
can be explicitly computed by using Chern-Simons transgression form
\[
T\widehat{A}(\nabla^{r(0)},\nabla^{r(1)})=\frac{d+1}{2}\int_0^1\widehat{A}(\omega,\Omega_t) dt,
\]
where $\omega=\nabla^{r(0)}-\nabla^{r(1)}$ and $\Omega_t$ is curvature form of $\nabla^{r(0)}+t\omega$. We have 
\[
\int_{[0,1]\times M} \widehat{A}(\nabla) = \int_M T\widehat{A}(\nabla^{r(0)},\nabla^{r(1)})=\frac{d+1}{2}\int_0^1\widehat{A}(\omega,\Omega_t)\,dt,
\]
which follows from a general lemma below
\begin{lem}
Let $F:\mathfrak{g}\times\mathfrak{g}\times\cdots\times\mathfrak{g}\rightarrow \mathbb{R}$ be a k-linear function on Lie algebra of $G$ and $F$ is invariant under adjoint action of $G$ on $g$. Given a linear path of connection 1-form $\omega_t=\omega_0+t\alpha$ on a principal $G$-bundle $P$, $\Omega_t=d\omega_t+\omega_t\wedge\omega_t$ is the curvature 2-form of $\omega_t$, then 
\[
\frac{d}{dt} F(\Omega_t,...,\Omega_t)=k dF(\alpha,\Omega_t,...,\Omega_t).
\]
\end{lem}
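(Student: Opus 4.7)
The plan is to combine three ingredients: the derivative of the curvature along the linear path, the graded Leibniz rule for symmetric $\mathrm{Ad}$-invariant multilinear forms, and the Bianchi identity. Since only the symmetric part of $F$ contributes when all arguments coincide, I first replace $F$ by its symmetrization without loss of generality.

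I would begin by computing, from $\omega_t = \omega_0 + t\alpha$,
\[
\frac{d}{dt}\Omega_t = d\alpha + \alpha\wedge\omega_t + \omega_t\wedge\alpha = d_{\omega_t}\alpha,
\]
where $d_{\omega_t}$ denotes the exterior covariant derivative induced by $\omega_t$ (acting on $\mathfrak{g}$-valued forms via the adjoint representation). Because each $\Omega_t$ has even form-degree, the ordinary Leibniz rule together with the symmetry of $F$ yields
\[
\frac{d}{dt} F(\Omega_t, \ldots, \Omega_t) = k\, F(d_{\omega_t}\alpha, \Omega_t, \ldots, \Omega_t).
\]

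Next I would invoke the standard Chern--Weil covariant Leibniz rule: for $\mathfrak{g}$-valued forms $\xi_1,\ldots,\xi_k$ with $\deg \xi_i = p_i$,
\[
d F(\xi_1, \ldots, \xi_k) = \sum_{i=1}^k (-1)^{p_1 + \cdots + p_{i-1}} F(\xi_1, \ldots, d_{\omega_t}\xi_i, \ldots, \xi_k).
\]
This identity is the infinitesimal manifestation of $\mathrm{Ad}$-invariance: differentiating $F(\mathrm{Ad}_{g}\xi_1,\ldots,\mathrm{Ad}_g\xi_k)=F(\xi_1,\ldots,\xi_k)$ along a one-parameter family produces exactly the $[\omega_t,-]$ commutator terms that upgrade the plain $d$ to $d_{\omega_t}$. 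Specializing to $\xi_1=\alpha$ and $\xi_j=\Omega_t$ for $j\geq 2$, every term involving $d_{\omega_t}\Omega_t$ vanishes by the Bianchi identity, leaving
\[
d F(\alpha, \Omega_t, \ldots, \Omega_t) = F(d_{\omega_t}\alpha, \Omega_t, \ldots, \Omega_t).
\]
Combining this with the previous display yields the claim.

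The main obstacle, if one insists on a self-contained argument, is deducing the covariant Leibniz rule directly from bare $\mathrm{Ad}$-invariance; this is entirely standard in Chern--Weil theory (see, e.g., Kobayashi--Nomizu), so in practice I would cite it rather than reprove it. Everything else amounts to a term-by-term verification with the Bianchi identity doing the heavy lifting.
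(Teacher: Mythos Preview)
Your proof is correct and follows essentially the same route as the paper's. Both compute $\tfrac{d}{dt}\Omega_t=d\alpha+[\omega_t,\alpha]$, apply the Leibniz rule for $F$, and then cancel the bracket terms in $dF(\alpha,\Omega_t,\ldots,\Omega_t)$ against those in $kF(d_{\omega_t}\alpha,\Omega_t,\ldots,\Omega_t)$ using Ad-invariance and the Bianchi identity; the only difference is that you package the bracket terms into the covariant differential $d_{\omega_t}$ and cite the covariant Leibniz rule, whereas the paper writes out the brackets explicitly and invokes the infinitesimal invariance identity $F([\omega_t,\alpha],\Omega_t,\ldots)-(k-1)F(\alpha,[\omega_t,\Omega_t],\ldots)=0$ by hand.
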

\begin{proof}
By definition
\begin{align*}
\Omega_t&=d\omega_t+\omega_t\wedge\omega_t\\
&=d\omega_0+td\alpha+(\omega_0+t\alpha)\wedge(\omega_0+t\alpha)\\
&=\Omega_0+td\alpha+t\omega_0\wedge\alpha+t\alpha\wedge\omega_0+t^2\alpha\wedge\alpha
\end{align*}
\begin{align*}
d\Omega_t&=d\omega_t\wedge\omega_t-\omega_t\wedge d\omega_t\\
&=(\Omega_t-\omega_t\wedge\omega_t)\wedge\omega_t-\omega_t\wedge(\Omega_t-\omega_t\wedge\omega_t)\\
&=[\Omega_t,\omega_t].
\end{align*}
Using linearity of $F$, we have $\frac{d}{dt}F(\Omega_t,...,\Omega_t)=k F(d\alpha+[\omega_t,\alpha],\Omega_t,...,\Omega_t)$, and \[dF(\alpha,\Omega_t,...,\Omega_t)=F(d\alpha,\Omega_t,...,\Omega_t)+(k-1)F(\alpha,[\omega_t,\Omega_t],...,\Omega_t).\]
Since $F$ is invariant under adjoint action, \[F([\omega_t,\alpha],\Omega_t,...,\Omega_t)-(k-1)F(\alpha,[\omega_t,\Omega_t],...,\Omega_t)=0,\]
it's immediate to get 
\[
\frac{d}{dt} F(\Omega_t,...,\Omega_t)=k dF(\alpha,\Omega_t,...,\Omega_t).
\]
\end{proof}


\subsection{Eta Invariants of Signature Operator}
In \cite{ouyang}, Ouyang computed the $\eta$-invariant of signature operator for circle bundles over surface $\Sigma$. In fact, he proved a more general theorem when $\Sigma$ is orbifold. 
\begin{theorem}[\cite{ouyang}]
Let $p: E\rightarrow\Sigma$ be a complex line bundle over surface $\Sigma$. Equip the fiber with metric $\widetilde{g}$ and let $\widetilde{\nabla}$ be a $\widetilde{g}$ preserving connection in $E$. Assume the curvature $\widetilde{R}$ is constant on $F$. Then the $\eta$-invariant of the circle bundle of radius $r$ is given by 
\begin{equation}
    \eta(S_r E)=\frac{2}{3}l\bigg\{ \frac{\pi r^2}{\vol(\Sigma)}\chi-(\frac{\pi r^2}{\vol(\Sigma)})^2 l^2\bigg\}+\frac{1}{3}l-\sign(l) \label{sig},
\end{equation}
where $l$ is the Euler number of the line bundle $E\rightarrow\Sigma$, $\chi$ is the Euler characteristic of $\Sigma$. 
\end{theorem}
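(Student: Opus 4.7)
The plan is to parallel Nicolaescu's variational argument for the Dirac case carried out above, with the Hirzebruch $L$-class playing the role of $\widehat{A}$. Set $\xi^{\sign}_r = \tfrac{1}{2}\bigl(\eta(S_rE) + h^{\sign}_r\bigr)$, where $h^{\sign}_r$ is the kernel dimension of the tangential signature operator, and regard $\xi^{\sign}_r$ as a function of the fiber radius $r$. First, I would apply the APS index theorem to the signature operator on the cylinder $[0,1]\times M$ equipped with the warped product metric $du^2 \oplus g_M^{r(u)}$, where $r(u)$ smoothly interpolates between $r_0$ and $r_1$, to obtain
\[
\xi^{\sign}_{r_1} - \xi^{\sign}_{r_0} = \mathrm{SF}(\mathcal{B}_{r(u)}) + \int_{[0,1]\times M} L(\nabla).
\]
Choosing $r(u)$ generically so that the tangential family $\mathcal{B}_{r(u)}$ stays invertible (in analogy with the input from \cite{surg} used in the Dirac step) kills the spectral flow term.

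Next I would reduce the bulk integral to a transgression on $M$ via the general lemma proved above, applied with $F = L_1 = p_1/3$,
\[
\int_{[0,1]\times M} L(\nabla) = \int_M TL(\nabla^{r_0},\nabla^{r_1}),
\]
and compute $TL$ explicitly from the connection matrix $\omega_r$ already displayed, using the local frame $(\eta_r,\eta^1,\eta^2)$. Expanding in powers of $r$ produces a polynomial in $r^2$, and integration over $M$ using the Gauss-Bonnet identity $\int_\Sigma \kappa\, dA = 2\pi\chi$ together with the Euler-number normalization expressing $\int_M \eta\wedge d\eta$ in terms of $l\cdot\vol(\Sigma)$ should reproduce the two polynomial terms $\tfrac{2l\chi}{3}\cdot\tfrac{\pi r^2}{\vol(\Sigma)}$ and $-\tfrac{2l^3}{3}\bigl(\tfrac{\pi r^2}{\vol(\Sigma)}\bigr)^2$ appearing in \eqref{sig}.

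Finally, to pin down the $r$-independent constant $\tfrac{1}{3}l - \sign(l)$, I would take the adiabatic limit $r \to 0$ and apply Dai's formula for the signature $\eta$-invariant of a circle fibration. Because $\Sigma$ is two-dimensional, its own signature $\eta$ vanishes, so the limit reduces to a spectral-asymmetry contribution from the vertical $S^1$ direction coupled to harmonic forms on $\Sigma$, yielding the topological term $-\sign(l)$, together with a Chern-Simons correction from the non-flat circle-bundle connection that accounts for the $\tfrac{1}{3}l$ piece. The main obstacle I expect is precisely this last step: unlike the Dirac case, the signature operator retains nontrivial kernel coming from base harmonic forms, so Dai's $\tau$-form must be computed by hand against $\omega_r$ and the topological and Chern-Simons contributions carefully separated; the transgression computation of the second paragraph, while algebraically tedious, is by comparison routine.
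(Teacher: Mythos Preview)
The paper does not prove this theorem. It is quoted as a result of Ouyang \cite{ouyang}, and the text immediately following the statement does nothing more than verify that the particular circle bundle $M\to\Sigma$ arising in the paper, equipped with the metric $g_M=\eta\otimes\eta\oplus\pi^*g_\Sigma$ and the connection $\widetilde{\nabla}=d\oplus\pi^*(\nabla^\Sigma)$, satisfies Ouyang's hypotheses (that $\widetilde{\nabla}$ preserves the fiber metric and that the curvature is fiberwise constant). So your proposal is not a comparison against the paper's proof---there is none---but an independent attempt to reprove Ouyang's formula by transplanting Nicolaescu's variational scheme from $\widehat{A}$ to $L$.

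As a proof strategy, your outline is broadly plausible, but one step is wrong as stated. You write that you will choose $r(u)$ generically so that the tangential signature operator $\mathcal{B}_{r(u)}$ stays invertible, in analogy with the Dirac case. This is impossible: on a closed oriented $3$-manifold the odd signature operator has kernel equal to the space of harmonic forms, whose dimension is the sum of the Betti numbers and is positive and metric-independent. There is no genericity argument that removes it. The correct observation is that because $h^{\sign}_r$ is constant in $r$ (it is topological), the spectral flow of the family vanishes automatically, and the variation of $\xi^{\sign}_r$ reduces to the transgression integral without any appeal to \cite{surg}. Apart from this, the transgression computation and the use of Dai's adiabatic limit for the constant term are the right ingredients; the latter is indeed the delicate part, and is essentially where Ouyang's own work lies.
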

We can check that the corresponding disk bundle of the circle bundle $M\rightarrow\Sigma$ equipped with connection $\eta$ and the metric $g_M=g_F\oplus\pi^*g_\Sigma=\eta\otimes\eta\oplus\pi^*g_\Sigma$ satisfies the conditions of the theorem above. First extend the metric from $M\rightarrow\Sigma$ to its disk bundle $E\rightarrow\Sigma$ by setting 
\[
g_E=dr^2+r^2 g_F + \pi^*g_\Sigma=dr^2+r^2\eta\otimes\eta+\pi^*g_\Sigma.
\]
The connection $\widetilde{\nabla}$ can be defined to be of the form 
\[
\widetilde{\nabla}=d\oplus\pi^*(\nabla^\Sigma),
\]
where $\nabla^{\Sigma}$ is the Levi-Civita connection of $g_{\Sigma}$. 
In fact, for any local vector fields $X,Y,Z$ on the fiber of $E\rightarrow\Sigma$, we have 
\begin{eqnarray*}
\widetilde{\nabla}_Z (\eta\otimes\eta(X,Y))&=&(d+i\eta)(Z)(\eta(X)\eta(Y))\\
&=& (Z\eta(X))\eta(Y)+\eta(X)(Z\eta(Y))+2i\eta(Z)\eta(X)\eta(Y)\\
&=& \eta(\widetilde{\nabla}_ZX)\eta(Y)+\eta(X)\eta(\widetilde{\nabla}_ZY)\\
&=& \eta\otimes\eta(\widetilde{\nabla}_ZX,Y)+\eta\otimes\eta(X,\widetilde{\nabla}_ZY).
\end{eqnarray*}
Therefore, we can see that $\widetilde{\nabla}$ is compatible with the fiber metric. The curvature tensor of $\widetilde{\nabla}$, $\widetilde{R}$ is pulled back from the curvature tensor $R$ of $\nabla^\Sigma$, so it is invariant along the fiber. 

\section{Result}
It's not hard to see from \ref{dir} and \ref{sig} that 
\begin{align}\label{eq: correc}
\omega(X,g_X,\beta) = -\frac{1}{2}h_{\Dir}-\frac{1}{2}\eta_{\Dir}(M, g|_M)-\frac{1}{8}\eta_{sign}(M)=-\frac{1}{2}h_{\Dir}+h_{1/2}
\end{align}
Note here in $\omega(X,g_X,\beta)$ we use the Levi-Civita connection of $g_X$ to define the $\eta$-invariant of Dirac operator, however, in the definition of Seiberg-Witten invariant, the connection we used is circle bundle compatible connection of the form $\tilde{\nabla}=d\oplus\pi^*(\nabla^{Y})$. The idea to solve this problem is to consider a path of connections $\nabla^t, t\in [0,1]$ connecting the Levi-Civita connection and the bundle compatible connection $\tilde{\nabla}$ such that $\nabla^t$ is compatible with $g_X$ for each $t\in [0,1]$, the associated Dirac operators $D_A^{r,t}$ at time $t$ can be viewed as a compact perturbation of $D_A^{r,0}$, so have the same index. 


Consider a  path of connections $\nabla^t$ by generalizing the method in \cite{etacircle} to $4$-manifolds: first define a sequence of bundle metrics parameterized by the length of fiber  $g_X^{(r)}=r^2\eta\otimes\eta\oplus \pi^*g_Y$ where $\eta$ be the globally defined connection 1-form of length 1 with respect to the metric $g_X=g_X^{(1)}$. Then we complete $r\eta$ to form a local orthonormal coframe of the form $\{e^0=r\eta, e^1, e^2, e^3\}$ and let $\{e_0,e_1,e_2,e_3\}$ be the corresponding local dual orthonormal frame with respect to the metric $g_X^{(r)}$. We define as in \cite{etacircle} a family of bundle maps $L_t: TX\rightarrow TX$ locally by 
\[
e_0\rightarrow t e_0, \quad e_i\rightarrow e_i, i=1,2,3
\]
where $e_0$ is the vector field of the free circle action defined earlier. $L_t$ defines an isometry from $(TX,g_X^{(rt)})$ to $(TX,g_X^{(r)})$ for $r>0$ and $t\in (0,1]$. Now the connection defined by 
\[
\nabla^{r,t}=L_t\nabla^{rt}L_t^{-1}
\]
is compatible with $g_X^{(r)}$. To see this, let $X, Y, Z$ be local vector field on $X$ and compute the derivative of $g^{(r)}(Y,Z)$ in the direction $X$. 
\begin{eqnarray*}
Xg^{(r)}(Y,Z) &=& Xg^{(rt)}(L_t^{-1}Y, L_t^{-1}Z)\\
&=& g^{(rt)}(\nabla^{(rt)}_X L_t^{-1}Y, L_t^{-1}Z)+g^{(rt)}(L_t^{-1}Y, \nabla^{(rt)}_X L_t^{-1}Z)\\
&=& g^{(r)}(L_t\nabla^{(rt)}_XL_t^{-1}Y, Z)+g^{(r)}(Y, L_t\nabla^{(rt)}_XL_t^{-1}Z).    
\end{eqnarray*}

We will choose $\nabla^{r,t}, t\in [0,1]$ as our path of connections. Using the local frame defined earlier, we can write down the matrix of connection 1-form $\omega$ as follows 
{\footnotesize
\[
\omega_{r,t}=
\begin{bmatrix}
        0   & ra^{(t)}_{12}e^2+ra^{(t)}_{13}e^3 & -ra^{(t)}_{12}e^1+ra^{(t)}_{23}e^3 & -ra^{(t)}_{13}e^1-ra^{(t)}_{23}e^2 \\
    -ra^{(t)}_{12}e^2-ra^{(t)}_{13}e^3 & 0 & -ra^{(t)}_{12}e^0+\omega^1_2 & -ra^{(t)}_{13}e^0+\omega^1_3 \\
    ra^{(t)}_{12}e^1-ra^{(t)}_{23}e^3 & ra^{(t)}_{12}e^0-\omega^1_2 & 0 & -ra^{(t)}_{23}e^0+\omega^2_3\\
    ra^{(t)}_{13}e^1+ra^{(t)}_{23}e^2 & ra^{(t)}_{13}e^0-\omega^1_3 & ra^{(t)}_{23}e^0-\omega^2_3 & 0 
\end{bmatrix},
\]}
where $a_{ij}^{(t)}=ta_{ij}$ and $a_{ij}$ is defined by  
\[
d\eta=e^1\wedge(a_{12}e^2+a_{13}e^3)+e^2\wedge(-a_{12}e^1+a_{13}e^3)+e^3\wedge(-a_{13}e^1-a_{23}e^2).
\]
The connection 1-form matrix of $\widetilde{\nabla}$ is 
\[
\widetilde{\omega}=
\begin{bmatrix}
        0   & 0 & 0 & 0 \\
    0 & 0 & \omega^1_2 & \omega^1_3 \\
    0 & -\omega^1_2 & 0 & \omega^2_3\\
    0 & -\omega^1_3 & -\omega^2_3 & 0 
\end{bmatrix}.
\]
We can see from the connection matrix above that $\nabla^{r,t}\rightarrow\widetilde{\nabla}$ as $t\rightarrow 0$. When $t=1$, $L_t=id$, $\nabla^{r,t}$ is just the Levi-Civita connection of $g_X^{(r)}$. The path of corresponding Dirac operators can be written down as  
\begin{lem}
\[
D^{r,t}_A=D_A-\frac{1}{2}r^2t^2\sigma (\eta\wedge d\eta),
\]
where $D^{r,t}_A$ is the Dirac operator associated to the Levi-Civita connection $\nabla^{r,t}$ and $D_A$ is the Dirac operator associated to the connection $\widetilde{\nabla}$. 
\end{lem}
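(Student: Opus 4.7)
The plan is to show that $D^{r,t}_A - D_A$ is a zero-order operator given by Clifford multiplication by an explicit $3$-form proportional to $\eta\wedge d\eta$. The point is that two Dirac operators built from metric-compatible connections on the same spinor bundle differ by the Clifford trace of their connection-form difference, and in our setting this difference has a very constrained shape.

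First I would read off $\beta := \omega_{r,t}-\widetilde\omega$ directly from the two matrices displayed above. Its only nonzero entries involve the fiber index $0$: the components $\beta^0_i$ are horizontal $1$-forms with coefficients $\pm rt\,a_{jk}$, while the components $\beta^i_j$ for $1\le i<j\le 3$ are the single multiples $-rt\,a_{ij}\,e^0$. Lifting $\beta$ to the spinor bundle in the standard way and contracting against the frame gives
\[
D^{r,t}_A - D_A = \frac{1}{4}\sum_{a,b,c}\beta_{ab}(e_c)\,e_c\cdot e_a\cdot e_b\cdot.
\]
I would then simplify this triple sum using the Clifford relations $e_ae_b+e_be_a=-2\delta_{ab}$, separating the repeated-index and the all-distinct contributions. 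The repeated-index terms collapse to degree-$1$ Clifford elements weighted by $\beta_{0i}(e_0)$ or $\beta_{ij}(e_k)$ with $k$ horizontal, and these vanish term by term because $\beta_{0i}$ is purely horizontal and $\beta_{ij}$ (for $i,j\ge 1$) is a pure multiple of $e^0$. Among the all-distinct terms, those with $\{a,b,c\}\subset\{1,2,3\}$ also vanish for the same reason, so only triples containing the fiber index $0$ survive.

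Each surviving triple rearranges, by a Clifford permutation, into a multiple of $e_0\cdot e_i\cdot e_j$ with $1\le i<j\le 3$. Collecting coefficients and invoking the Cartan structure equation $de^0=-\omega^0_b\wedge e^b$ together with $e^0=r\eta$ identifies the $a_{ij}$ as the coefficients of $d\eta$ in the horizontal coframe, so the assembled $3$-form equals an explicit constant times $e^0\wedge d\eta = r\,\eta\wedge d\eta$. Tracking the factors of $r$, $t$, and the spin-lift constant $1/4$ then yields the coefficient $-\frac{1}{2}r^2t^2$ claimed in the lemma. The main obstacle is the combinatorial bookkeeping: one has to use the antisymmetry of $\beta$ and the Clifford relations carefully to see that all lower-degree Clifford contributions cancel, leaving only the $3$-form part, and only at that point does the identification with $\eta\wedge d\eta$ become visible.
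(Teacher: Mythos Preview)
Your proposal is correct and follows essentially the same approach as the paper: both compute the connection-form difference $\beta=\omega_{r,t}-\widetilde\omega$, observe that the difference of the associated Dirac operators is Clifford multiplication by the spin-lift of $\beta$, and then evaluate this explicitly using the displayed matrices. The paper packages your triple Clifford sum via the identification $\mathfrak{so}(4)\cong\Lambda^2 T^*X$, writing $\beta$ directly as a $\Lambda^2$-valued $1$-form $\tfrac{1}{2}\sum_i e^i\otimes (rt\eta)\wedge\iota_{e_i}d(rt\eta)+\tfrac{1}{2}rt\eta\otimes d(rt\eta)$ and then reading off $\sigma(\beta)$, whereas you expand the product $\tfrac{1}{4}\sum\beta_{ab}(e_c)\,e_c e_a e_b$ and kill the degree-$1$ pieces by hand; these are two presentations of the same computation. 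One small clarification: in your vanishing argument for the repeated-index terms you should also note that $\beta_{0i}(e_i)=0$ (not just $\beta_{0i}(e_0)=0$), which follows from the explicit off-diagonal entries $\beta^0_1=rt(a_{12}e^2+a_{13}e^3)$, etc., having no $e^i$-component for the matching $i$.
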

\begin{proof}
The proof is essentially the same as the proof given in \cite{baldridge}. It follows by writing down the local connection 1-form matrix $\omega^{r,t}$ and $\widetilde{\omega}$ for $\nabla^{r,t}$ and $\widetilde{\nabla}$ respectively using a local frame as we did above and take the difference 1-form  $\omega=\omega^{r,t}-\widetilde{\omega}\in \Omega^1(\mathfrak{so}(T^*X))$. Then the local difference of the two corresponding Dirac operators $D^{r,t}_A$ and $D_A$ can be written as the Clifford multiplication by $\omega$
\[
D^{r,t}_A-D_A=\sigma(\omega)
\]
here $\omega\in \Omega^1(\mathfrak{so}(T^*X))\cong \Omega^1(\Lambda^2 T^*X)$, where the latter is the space of 1-forms with value in the exterior square of $T^*X$. Using the isomorphism 
\[
(a^k_j)\longmapsto \frac{1}{2}\sum_{j<k} a^j_ke^j\wedge e^k
\]
from $\mathfrak{so}(4)$ to $\Lambda^2 T^*X$, we can write $\omega$ as an element in $\Omega^1(\Lambda^2 T^*X)$ 
\[
\omega=\frac{1}{2}\sum_{i=1}^3 e^i\otimes (rt\eta)\wedge \iota_{e_i}(d(rt\eta))+\frac{1}{2}rt\eta\otimes d(rt\eta),
\]
then 
\[
\sigma(\omega)=-\frac{1}{2}r^2t^2\sigma(\eta\wedge d\eta).
\]
\end{proof}

It remains to show that the index of $D_A^{r,t}$ is unchanged along the path $t=0$ to $t=1$. As we can see from the lemma above, $D_A^{r,t}$ can be thought of as zero order perturbation of $D_A$ and by the theory in compact operator, it's sufficient to prove the following lemma
\begin{prop}
If $\omega\in \Omega^1(\Lambda^2T^*X)$, and $i$ is the Sobolev embedding $L^2_1(W^\infty; S^-)\subset L^2(W^\infty; S^-)$, $\sigma(\omega)$ is the Clifford multiplication, then the composition
\[
i\circ\sigma(\omega): L^2_1(W^\infty; S^+)\rightarrow L^2(W^\infty; S^-)
\]
is compact. 
\end{prop}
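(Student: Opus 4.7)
The plan is to establish compactness of $i\circ\sigma(\omega)$ by approximating $\sigma(\omega)$ by compactly supported multiplication operators, reducing to the classical Rellich--Kondrachov compactness on relatively compact submanifolds of $W^\infty$, and passing to the limit.

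First, I would verify that $\sigma(\omega)$ is a bounded operator $L^2_1(W^\infty;S^+)\to L^2_1(W^\infty;S^-)$. Since $\omega\in\Omega^1(\Lambda^2T^*X)$ is smooth on the compact manifold $X$, its pullback to $W^\infty$ has uniformly bounded pointwise norm together with its covariant derivative, so Clifford multiplication by $\omega$ sends $L^2_1$ into itself boundedly via the Leibniz rule applied to $\nabla(\sigma(\omega)\phi)$.

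Next, choose smooth cutoff functions $\chi_R\colon W^\infty\to[0,1]$ equal to $1$ on the compact piece $K_R:=W\cup([-R,R]\times M)$ and supported in $K_{R+1}$. Define $T_R:=i\circ(\chi_R\sigma(\omega))$. Each $T_R$ factors through $L^2_1(K_{R+1};S^-)\hookrightarrow L^2(K_{R+1};S^-)$, an embedding on a compact manifold-with-boundary which is compact by the classical Rellich--Kondrachov theorem; hence every $T_R$ is a compact operator from $L^2_1(W^\infty;S^+)$ into $L^2(W^\infty;S^-)$.

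To conclude that $i\circ\sigma(\omega)$ itself is compact, I would show that any bounded sequence $\{\phi_n\}\subset L^2_1(W^\infty;S^+)$ has a subsequence whose images under $i\circ\sigma(\omega)$ are Cauchy in $L^2(W^\infty;S^-)$. Applying Rellich on each $K_R$ together with a diagonal argument yields a subsequence converging in $L^2_{\mathrm{loc}}$. One then needs a uniform tail estimate: for every $\epsilon>0$ there exists $R$ with $\|\sigma(\omega)\phi_n\|_{L^2(W^\infty\setminus K_R)}<\epsilon$ for all $n$.

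The main obstacle is precisely this tail control. Since $\omega$ is pulled back from the compact $X$, it is merely bounded and not decaying along the cylindrical ends of $W^\infty$, so the naive bound $\|\sigma(\omega)\phi\|_{L^2(W^\infty\setminus K_R)}\leq\|\omega\|_\infty\|\phi\|_{L^2(W^\infty\setminus K_R)}$ provides no smallness as $R\to\infty$. To close the argument, I would exploit the spectral gap of the asymptotic operator $\mathcal{D}|_M$ (in force under the invertibility hypothesis used earlier in the paper): expand sections on each end $[0,\infty)\times M$ in eigenmodes of $\mathcal{D}|_M$, separating a finite-dimensional low-frequency contribution, controlled through Rellich on $K_R$ after cutoff, from a high-frequency tail whose $L^2_1$-norm forces exponential decay in the cylinder coordinate with rate bounded by the spectral gap. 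Combining these two estimates provides the uniform smallness of the tails, completing the compactness argument.
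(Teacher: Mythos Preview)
Your approach has a genuine gap in the tail estimate. The spectral gap of $\mathcal{D}|_M$ gives exponential decay for \emph{solutions} of the Dirac equation on the cylinder (sections $\phi$ with $(\partial_t+\mathcal{D}|_M)\phi=0$), not for arbitrary sections that are merely bounded in $L^2_1$. Writing $\phi=\sum_\lambda \phi_\lambda(t)\psi_\lambda$ on the end $[0,\infty)\times M$, the $L^2_1$-bound only says $\sum_\lambda \int \big(|\phi_\lambda'|^2+(1+\lambda^2)|\phi_\lambda|^2\big)\,dt\leq C$; each coefficient $\phi_\lambda$ is just an $L^2_1$ function of $t$, free to be a unit bump centered arbitrarily far out. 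Your high-frequency piece can indeed be made uniformly small in $L^2$ via the factor $1/\Lambda^2$, but the finitely many low-frequency modes still carry bumps that translate to infinity with fixed $L^2$-mass (take $\phi_n$ a fixed profile shifted by $n$ along the cylinder). Hence $\|(1-\chi_R)\sigma(\omega)\phi_n\|_{L^2}$ does not tend to zero uniformly in $n$, the approximation $T_R\to i\circ\sigma(\omega)$ is not in operator norm, and the diagonal argument fails to produce an honest $L^2$-Cauchy subsequence. Since $\omega$ is periodic (pulled back from $X$) rather than decaying, no amount of cutoff-plus-Rellich can manufacture the needed tail control.

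The paper uses an entirely different mechanism: the Fourier--Laplace transform of \cite{liftof}, which passes from $W^\infty$ to the compact manifold $X$. A bounded sequence $\{u_k\}\subset L^2_1(W^\infty;S^+)$ is sent to a bounded family $\{\mathcal{F}(u_k)\}$ over $X$; there $\widehat{\Psi}=i\circ\sigma(\omega)$ is compact by Rellich on a closed manifold, so $\{\widehat{\Psi}(\mathcal{F}(u_k))\}$ has a convergent subsequence, and convergence is pushed back through the inverse transform along a contour $I(\nu)$, using finiteness of $\int_{W^\infty}\int_{I(\nu)}|e^{-\mu(f(x)+n)}|^2$. The transform builds an exponential weight into the passage between $W^\infty$ and $X$, which is precisely what supplies the uniform tail control that your cutoff route cannot generate on its own.
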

\begin{proof}
The Sobolev inequality may fail for non-compact manifold. So $i$ may not be a compact operator in general. We use instead the Laplace-Fourier transform introduced in \cite{liftof}. Consider the following diagram, 
\begin{center}
\begin{tikzcd}
L^2_1(W^\infty; S^+) \arrow[r, "\Psi"] \arrow [d,"\mathcal{F}"]
& L^2(W^\infty; S^-) \arrow[d, "\mathcal{F}"] \\
L^2_1(X; S^+) \arrow[r, "\widehat{\Psi}"]
& L^2(X;S^-)
\end{tikzcd}
\end{center}
Here $\Psi$ is the composition $i\circ\sigma(\omega)$ defined above, and $\mathcal{F}$ is the Laplace-Fourier transform. To prove the compactness of $\Psi$, consider a bounded sequence of sections $\{u_k\}\in L^2_1(W^\infty; S^+)$ and we need to prove $\{\Psi(u_k)\}$ has a convergent subsequence. To show this, we apply Laplace-Fourier transform to $\{u_k\}$ and get a sequence $\{\mathcal{F}(u_k)\}$ of sections in $L^2_1(X; S^+)$, which is bounded by proposition 4.1 in \cite{liftof}. By direct computation, $\widehat{\Psi}$ has the same form as $\Psi=i\circ\sigma(\omega): L^2_1(X; S^+)\rightarrow L^2(X;S^-)$, which is a compact operator when $X$ is compact by the Rellich theorem. So $\{\widehat{\Psi}(\mathcal{F}(u_k))\}$ has a convergent subsequence. We obtain a corresponding subsequence by taking the inverse transform as is defined in \cite{liftof}
\[
v_{k}(x+n)=\frac{1}{2\pi i}\int_{I(\nu)}e^{-\mu (f(x)+n)} \widehat{\Psi}(\mathcal{F}(u_{k}))(x)\,d\mu.
\]
We can prove that $v_k(x)$ is convergent by showing that the inverse Laplace transform 
\[
L^2(X;S^-)\rightarrow L^2(W^\infty;S^-)
\]
is bounded. This can be seen by 
\begin{align*}
    \int_{W^\infty} |g|\cdot|v_k|\,dx &= \frac{1}{2\pi i}\int_{W^\infty} |g|\cdot\bigg|\int_{I(\nu)}e^{-\mu (f(x)+n)}\widehat{\Psi}(\mathcal{F}(u_{k}))(x)\bigg|\,dx\\
    &\leq \frac{1}{2\pi i}\int_{W^{\infty}}|g|\cdot\|\widehat{\Psi}(\mathcal{F}(u_{k}))(x)\|_{L^2(X)}\cdot\bigg(\int_{I(\nu)}|e^{-\mu (f(x)+n)}|^2\bigg)^{1/2}\\
    &\leq \frac{1}{2\pi i}\|g\|_{L^2(W^\infty)}\cdot\bigg(\int_{W^\infty}\int_{I(\nu)}|e^{-\mu (f(x)+n)}|^2\bigg)^{\frac{1}{2}}\cdot \|\widehat{\Psi}(\mathcal{F}(u_{k}))\|_{L^2(X)}
\end{align*}
and the fact that the integral 
\[
\int_{W^\infty}\int_{I(\nu)}|e^{-\mu (f(x)+n)}|^2<\infty
\]
\end{proof}

In particular, we can use the above result to prove the correction term (\ref{eq: correc}) is 0. 
\begin{align*}
    \omega(X, g_X, \beta)=-\frac{1}{2}h_{\Dir}+h_{1/2}=-\frac{1}{2}h_{\dir}+h_{1/2}
\end{align*}
and in \cite{etacircle}, Nicolescu claimed the last term is 0. 

\bibliographystyle{amsplain}

\begin{thebibliography}{9}


\bibitem{surg} 
Ammann, Bernd.; Dahl, Mattias.; Humbert, Emmanuel.
\emph{Surgery and harmonic spinors.}
Adv. Math. 220 (2009), 523–539.

\bibitem{groupaction}
Atiyah, Michael; Hirzebruch, Friedrich, 
\emph{Spin-manifolds and group actions}. 1970 Essays on Topology and Related Topics (M\'emoires d\'edi\'es \`a Georges de Rham) pp. 18-28 Springer, New York 

\bibitem{spectralI}
Atiyah, M. F.; Patodi, V. K.; Singer, I. M.
\emph{Spectral asymmetry and Riemannian geometry. I. }
Math. Proc. Cambridge Philos. Soc. 77 (1975), 43-69. 

\bibitem{baldridge}
 Baldridge, Scott Jeremy, \emph{Seiberg-Witten Invariants, Orbifolds, and Circle Actions}.  Trans. Amer. Math. Soc. 355 (2003), no. 4, 1669-1697
 

\bibitem{fibered}
Chern, S. S.; Hirzebruch, F.; Serre, J.-P.
\emph{On the index of a fibered manifold}. 
Proc. Amer. Math. Soc. 8 (1957), 587-596.

\bibitem{dai}
Dai, Xianzhe.  
\emph{Adiabatic limits, nonmultiplicativity of signature, and Leray spectral sequence}. 
J. Amer. Math. Soc. 4 (1991), no. 2, 265–321.

\bibitem{fintushel}
Fintushel, Ronald; Stern, Ronald J. \emph{Knots, links, and 4-manifolds}. Invent. Math. 134 (1998), no. 2, 363-400.

\bibitem{foinvariant}
Furuta, Mikio, Ohta, Hiroshi,
\emph{Differentiable structures on punctured 4-manifolds}, 
Topology Appl. 51 (1993), no. 3, 291-301.


\bibitem{equiofseib}
Lim, Yuhan,
\emph{The equivalence of Seiberg-Witten and Casson invariants for homology 3-spheres}, 
Math. Res. Lett. 6 (1999), no. 5-6, 631-643

 \bibitem{split}
 Lin, Jianfeng; Ruberman, Daniel; Saveliev, Nikolai,\emph{A Splitting Theorem for the Seiberg-Witten Invariant of a Homology $S^1\times S^3$}. arXiv:1702.04417 [math.GT]

 \bibitem{mengtaubes}
Meng, Guowu; Taubes, Clifford Henry, 
\emph{SW=Milnor torsion},  
Math. Res. Lett. 3 (1996), no. 5, 661-674. 

\bibitem{etacircle}
Nicolaescu, Liviu I.
\emph{Eta invariants of Dirac operators on circle bundles over Riemann surfaces and virtual dimensions of finite energy Seiberg-Witten moduli spaces.}  
Israel J. Math. 114 (1999), 61-123. 

\bibitem{ouyang}
Ouyang, Mingqing \emph{Geometric invariants for Seifert Fibered 3-manifold}. Trans. Amer. Math. Soc. 346 (1994), no. 2, 641–659. 


\bibitem{rh}
Rohlin, V. A.\emph{New results in the theory of four-dimensional manifolds}. (Russian) 
Doklady Akad. Nauk SSSR (N.S.) 84, (1952). 221-224. 

\bibitem{cassontype}
Ruberman, Daniel; Saveliev, Nikolai, \emph{Casson-Type Invariants in Dimension Four}. 	arXiv:math/0501090

\bibitem{tb}
Taubes, Clifford Henry. \emph{ Casson's invariant and gauge theory}. J. Differential Geom. 31 (1990), no. 2, 547-599.

\bibitem{liftof}
Tomasz Mrowka, Daniel Ruberman, Nikolai Saveliev, 
  \emph{Seiberg-Witten Equations, End-Periodic Dirac Operators, and a Lift of Rohlin's Invariant},
J. Differential Geom.
Volume 88, Number 2 (2011), 333-377

\bibitem{weimin}
Weimin Chen. \emph{Casson's invariant and Seiberg-Witten gauge theory}. Turkish J. Math. 21 (1997), no. 1, 61–81. 
























\end{thebibliography}

\end{document}